\newcommand{\abs}[1]{\lvert #1 \rvert}
\newcommand{\res}[2]{{\rm res}_{#1}^{#2}}
\newcommand{\tr}[2]{{\rm tr}_{#1}^{#2}}
\newcommand{\N}[2]{{\rm N}_{#1}^{#2}}
\newcommand{\burn}{\underline{A}}
\newcommand{\burnghost}{\underline{\tilde{A}}}
\newcommand{\NN}{\mathbb{N}}
\newcommand{\ZZ}{\mathbb{Z}}
\newcommand{\Spec}{\operatorname{Spec}}
\newcommand{\tamb}[1]{\underline{#1}}
\newcommand{\sr}[1]{\mathscr{#1}}
\newcommand{\fr}[1]{\mathfrak{#1}}
\newcommand{\gent}[1]{(\!(#1)\!)}
\newcommand{\newrefformat}[2]{}
\theoremstyle{plain}
\newtheorem{theorem}{Theorem}[section]
\let\c@theorem\c@theorem\makeatother
\newtheorem{lemma}{Lemma}[section]
\let\c@lemma\c@theorem\makeatother
\newtheorem{corollary}{Corollary}[section]
\let\c@corollary\c@theorem\makeatother
\newtheorem{proposition}{Proposition}[section]
\let\c@proposition\c@theorem\makeatother
\let\c@conjecture\c@theorem\makeatother
\newtheorem*{theorem*}{Theorem}
\theoremstyle{definition}
\newtheorem{definition}{Definition}[section]
\let\c@definition\c@theorem\makeatother
\let\c@example\c@theorem\makeatother
\let\c@xca\c@theorem\makeatother
\theoremstyle{remark}
\newtheorem{remark}{Remark}[section]
\let\c@remark\c@theorem\makeatother
\let\c@equation\c@theorem
\numberwithin{equation}{section}
\theoremstyle{plain}
\newcommand{\thistheoremname}{}
\newtheorem*{genericthm*}{\thistheoremname}
\crefname{lemma}{Lemma}{Lemmas}
\crefname{thm}{Theorem}{Theorems}
\crefname{defn}{Definition}{Definitions}
\crefname{notn}{Notation}{Notations}
\crefname{const}{Construction}{Constructions}
\crefname{prop}{Proposition}{Propositions}
\crefname{rem}{Remark}{Remarks}
\crefname{cor}{Corollary}{Corollaries}
\crefname{equation}{Equation}{Diagrams}
\crefname{ex}{Example}{Examples}
\begin{document}

\begin{frontmatter}

\title{The Spectrum of the Burnside Tambara Functor of a Cyclic Group}

\author{Maxine Calle\corref{cor1}\fnref{fn1}}
\ead{callem@sas.upenn.edu}
\cortext[cor1]{Corresponding author}
\fntext[fn1]{Present address: University of Pennsylvania, PA}

\author{Sam Ginnett}
\ead{ginnetts@reed.edu}

\address[1]{Reed College, OR}
\address[2]{Reed College, OR}

\begin{abstract}
We derive a family of prime ideals of the Burnside Tambara functor for a finite group $G$. In the case of cyclic groups, this family comprises the entire prime spectrum. We include some partial results towards the same result for a larger class of groups.
\end{abstract}
\begin{keyword}
Tambara functor \sep Burnside ring \sep cyclic group \sep prime spectrum \\ \MSC[2020]{\textbf{19A22}, 55P91, 18B99, 13A15}
\end{keyword}
\end{frontmatter}

\section{Introduction}

Our work determines the spectrum of the Burnside Tambara functor on cyclic groups, expanding on the related works of Nakaoka  \cite{nakaoka:2011}, Lewis \cite{lewis:1980}, and Dress \cite{dress:1971}. In \cite{nakaoka:2011}, Nakaoka develops an analogy between the theory of the ideals of Tambara functors, defined as kernels of Tambara functor morphisms, and the theory of ideals of commutative rings. Using the appropriate definitions, many of the properties of ideals of commutative rings have direct analogues in the theory of Tambara functors. In particular, Tambara functors have a spectrum, consisting of all prime ideals. The Tambara spectrum is functorial in Tambara morphisms just as the Zariski spectrum is functorial in ring homomorphisms.

Nakaoka calculates the prime spectrum of the Burnside Tambara functor for cyclic $p$-groups in  \cite{nakaoka:2012, nakaoka:2014}. The Burnside Tambara functor $\burn_G$, formed by the collection of Burnside rings for each subgroup of a given finite group $G$, is the initial object in the category of Tambara functors over $G$ and therefore plays a role analogous to $\ZZ$ in the theory of commutative rings. In \cite{dress:1971}, Dress derives the spectrum of the Burnside ring, showing that the prime ideals can be indexed by a subgroup and a number $p$ that is either prime or zero. This is generalized by Lewis \cite[\S 6]{lewis:1980} to determine the prime ideals of the Burnside Green functor. 

In this paper, we first identify a family of prime ideals of the Burnside Tambara functor for any finite group $G$. This family is indexed by a subgroup $C\leq G$ and a number $p$ that is either prime or zero, similar to the result of Dress on the spectrum of the Burnside ring and Lewis on the spectrum of the Burnside Green functor. In the specific case of cyclic groups, we are able to determine that this family comprises the entire spectrum. We suspect that this result holds more generally, and offer some suggestions for future work along these lines. Our main results can be summarized as follows:

\begin{theorem}[\cref{thm:PCp prime,thm:main_thm,thm:containment}]
Let $G$ be a finite group. Then for any $C\leq G$ and $p$ a prime or zero,
$\fr{p}_{C, p}$ is a prime ideal of $\burn_G$, where $\fr{p}_{C, p}$ is defined in \cref{def:the_ideals}.

If $G$ is Abelian, then we can establish some containment rules. For any $H,K\leq G$ and $p,q$ prime, we have\begin{enumerate}
    \item[(i)] $\fr{p}_{H, 0}\subseteq \fr{p}_{K, 0}$ if and only if $K \leq H$,
    \item[(ii)] $\fr{p}_{H, 0}\subset\fr{p}_{H, p}$ and $\fr{p}_{H, p}\not\subseteq \fr{p}_{K, 0}$,
    \item[(iii)] $\fr{p}_{H, p}\subseteq \fr{p}_{K, q}$ if and only if $p = q$ and $O^p(K)\leq O^p(H)$,
\end{enumerate} where $O^p(H)$ is defined in \cref{thm:dress containment}.
Furthermore, if $G$ is cyclic, then $$\Spec(\burn_G) = \{\fr{p}_{C, p} \mid C \leq G,~ p \text{ prime or zero}\}.$$ 
\end{theorem}

For a cyclic group, the containment structure of the ideals that ``lie over'' $p$ is identical to the subgroup structure of $C_{n/p^\nu}$, where $p^\nu$ is the highest power of $p$ that divides $n$. (In the case of $p=0$, we have the subgroup lattice structure of $C_n$.) These relations are illustrated for $n=12$ in \cref{fig:incl C12} below, and a more general picture is given in \cref{fig:incl Cn}. Note that the containment structure of the ideals $\fr{p}_{C_i, p}$ for $p=0$ or $p\nmid n$ is dual to the subgroup containment structure of $C_n$. 

\begin{figure}[h!]
    \centering
\begin{tikzpicture}[node distance=1.5cm,line width=0.25pt]
\title{Title}
\node(0) at (0,0)  {$\fr{p}_{e,0}$};
\node(C3)       [above left  =0.5cm of 0] {$\fr{p}_{C_3,0}$};
\node(C2)       [above right  =0.5cm of 0] {$\fr{p}_{C_2,0}$};
\node(C6)       [above =1cm of C3] {$\fr{p}_{C_{6},0}$};
\node(C4)       [above =1cm of C2] {$\fr{p}_{C_4,0}$};
\node(C12)       [above =3cm of 0] {$\fr{p}_{C_{12},0}$};

\draw[<-] (0) -- (C3);
\draw[<-] (0) -- (C2);
\draw[<-] (C2) -- (C4);
\draw[<-] (C3) -- (C6);
\draw[<-] (C2) -- (C6);
\draw[<-] (C6) -- (C12);
\draw[<-] (C4) -- (C12);

\node(p) at (4.5,0.5)  {$\fr{p}_{e,p}$};
\node(C3p)       [above left  =0.5cm of p] {$\fr{p}_{C_3,p}$};
\node(C2p)       [above right  =0.5cm of p] {$\fr{p}_{C_2,p}$};
\node(C6p)       [above =1cm of C3p] {$\fr{p}_{C_{6},p}$};
\node(C4p)       [above =1cm of C2p] {$\fr{p}_{C_4,p}$};
\node(C12p)       [above =3cm of p] {$\fr{p}_{C_{12},p}$};

\draw[gray, ->] (0) -- (p);
\draw[gray, ->] (C2) -- (C2p);
\draw[gray, ->] (C3) -- (C3p);
\draw[gray, ->] (C4) -- (C4p);
\draw[gray, ->] (C6) -- (C6p);
\draw[gray, ->] (C12) -- (C12p);

\draw[<-] (p) to[bend left=10] (C3p);
\draw[<-] (p) to[bend right=10] (C2p);
\draw[<-] (C2p) to[bend right=10] (C4p);
\draw[<-] (C3p) to[bend left=10] (C6p);
\draw[<-] (C2p) to[bend left=10] (C6p);
\draw[<-] (C6p) to[bend left=10] (C12p);
\draw[<-] (C4p) to[bend right=10] (C12p);

\draw[red, ->] (p) to[bend left=10] (C2p);
\draw[red, ->] (C2p) to[bend left=10] (C4p);

\draw[red, ->] (C3p) to[bend right=10] (C6p);
\draw[red, ->] (C6p) to[bend right=10] (C12p);

\node(L2)  at (8,3) {\textcolor{red}{$p=2$}};

\draw[blue, ->] (p) to[bend right=10] (C3p);
\draw[blue, <-] (C6p) to[bend left=10] (C2p);
\draw[blue, ->] (C4p) to[bend left=10] (C12p);

\node(L3)  [below=0.1cm of L2] {\textcolor{blue}{$p=3$}};

\end{tikzpicture}
    \caption{\textbf{Inclusion structure of the prime ideals of $\burn_{C_{12}}$.} Every ideal in $\Spec(\burn_{C_{12}})$ is of the form $\fr{p}_{C_i, p}$ for some $i\mid 12$ and $p$ prime or $0$. There is an arrow from $\fr{p}_{C_i,p}$ to $\fr{p}_{C_j, q}$ if $\fr{p}_{C_i,p}\subseteq\fr{p}_{C_j, q}$. In the case where $p=2,3$, some of the inclusions become equalities, as indicated by the red and blue arrows, respectively.}
    \label{fig:incl C12}
\end{figure}
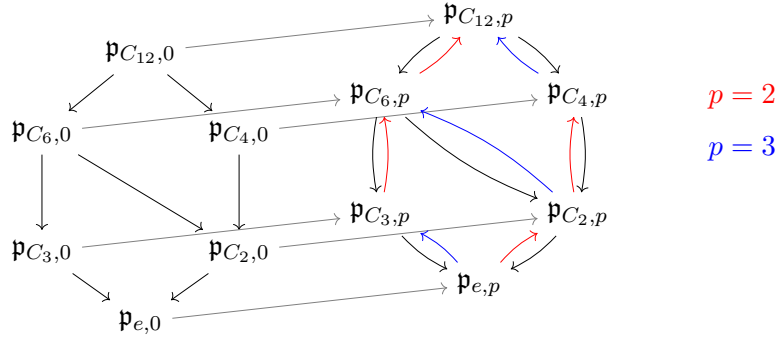

Our results for Abelian $G$ establish a bijection \[
\Spec(A(G))\longleftrightarrow \{\fr{p}_{C,p}\mid C\leq G, ~p\text{ prime or }0\}\subseteq \Spec(\burn_{G}),
\] however, this bijection is not continuous (see \cref{prop:no homeom}). By comparing the inclusion structures, we can clearly see that the Krull dimension of the two spectra differs.

\subsection*{Outline.} 
The second section recalls necessary background on Tambara functors and their ideals, with particular focus on the Tambara structure of the Burnside ring. 
Inspired by the work of A.~Dress \cite{dress:1971}, we establish a family of prime ideals of the Burnside functor in \cref{sec:collection of prime ideals} and examine their inclusion structure.
The remainder of the paper is dedicated to showing this family comprises the entire spectrum for the case of a cyclic group: we first describe the ideals in terms of generators in \cref{sec:gens} and then we complete the proof in \cref{sec:pf}. 

\subsection*{Acknowledgements.} The authors would like to thank both Kyle Ormsby and Ang\'elica Osorno, as well as the rest of the Reed College Collaborative Mathematics Research Group (CMRG), for their guidance and support. We are also very grateful to the referee for their many helpful and generous comments, particularly their simplified proof of \cref{thm:PCp prime}.

This work began when the authors were undergraduate participants in the CMRG during Summer 2019, under the guidance of K. Ormsby and  A. Osorno, with generous funding from NSF Grant DMS-1709302. The work in the final few sections, also begun during the summer program, was completed under the supervision of K. Ormsby the following academic year. The first author is now supported by an NSF Graduate Research Fellowship (DGE-1845298).


\section{Background: Tambara Ideals and the Burnside Spectrum}

\subsection{Tambara functors and prime ideals.} 
Tambara functors \cite{tambara:1993} are elaborations of Mackey functors with multiplicative norm maps in addition to restrictions and transfers. We will consider Tambara functors on a finite group $G$, although we note that the definition of Tambara functor may be extended to profinite groups \cite{nakaoka:2009}. 

\begin{definition}\label{defn:tamb functor}
Let $G$\textbf{Fin} denote the category of finite $G$-sets for a finite group $G$. A \textit{Tambara functor} $T$ on $G$ is a triple $(T^*, T_+, T_{\boldsymbol{\cdot}})$ where $T^*$ is a contravariant functor $G$\textbf{Fin}$\rightarrow$\textbf{Set} and $T_+, T_{\boldsymbol{\cdot}}$ are covariant functors $G$\textbf{Fin}$\rightarrow$\textbf{Set} such that 
\begin{enumerate}
    \item $(T^*, T_+)$ is a Mackey functor on $G$,
    \item $(T^*, T_{\boldsymbol{\cdot}})$ is a semi-Mackey functor on $G$, and
    \item given an exponential diagram
    \begin{center}
    \begin{tikzcd}
    X \arrow{d}[swap]{f} & \arrow{l}[swap]{p} A & \arrow{l}[swap]{\lambda} Z \arrow{d}{\rho}\\
    Y  & & B \ar[ll, "q"]
    \end{tikzcd}
    \end{center}
    in $G$\textbf{Fin} (in the sense of \cite{tambara:1993}), the diagram 
    \begin{center}
    \begin{tikzcd}
    T(X) \arrow{d}[swap]{T_\cdot(f)} & \arrow{l}[swap]{T_+(p)} T(A) \arrow{r}{T^*(\lambda)} &  T(Z) \arrow{d}{T_\cdot(\rho)}\\
    T(Y) & & T(B) \arrow{ll}{T_+(q)}
    \end{tikzcd}
    \end{center}
    commutes.
\end{enumerate}
For the sake of brevity, we use the notation $f_{+}:=T_{+}(f)$, $f_{\boldsymbol{\cdot}}:=T_{\boldsymbol{\cdot}}(f)$, and $f^*:=T^*(f)$.
\end{definition}

As is the case for Mackey functors, it suffices to specify how a Tambara functor behaves on transitive $G$-sets.  This observation prompts a second characterization of Tambara functors in terms of subgroups of $G$:

\begin{definition}
A Tambara functor $T$ on a finite group $G$ is completely specified by a ring $T(G/H)$ for all $H\leq G$ and the following maps for all subgroups $L\leq H\leq G$:
\begin{enumerate}
    \item Restriction $\res{L}{H} := q^*$
    \item Transfer $\tr{L}{H} := q_+$
    \item Norm $\N{L}{H} := q_{\boldsymbol{\cdot}}$
    \item Conjugation $c_{g, H} := (c_g)^*$
\end{enumerate}
where $q\colon G/K\rightarrow G/H$ is the quotient map and $c_g\colon G/H^g \rightarrow G/H$ is conjugation-by-$g$ with $H^g=g^{-1}Hg$. These maps must satisfy a number of compatibility conditions as specified M.~Hill and K.~Mazur in \cite{mazur:2019}. 
\end{definition}

Notably, while restriction and conjugation are ring maps, transfer and norm only respect the additive and multiplicative structures, respectively. Tambara functors form a category, the \textit{Tambara functor category}, and the morphisms in this category are natural transformations between Tambara functors.

\begin{definition}
A \textit{Tambara functor morphism} $\varphi\colon T\to S$ is a collection of ring homomorphisms $\varphi_X\colon T(X)\rightarrow S(X)$ for all finite $G$-sets $X$ which form a natural transformation of each of the three component functors.
\end{definition}

There are many parallels between Tambara functor theory and ring theory; for example, the \textit{kernel} of $\varphi$ is given by the collection of kernels of the associated ring maps. As in standard ring theory, the kernel of $\varphi$ will be an ideal of the domain, but now understood as a Tambara ideal. This work in particular will focus on prime Tambara ideals, which we introduce presently.

\begin{definition}\label{defn:tamb ideal}
An \textit{ideal} $\sr{I}$ of $T$ consists of a collection of standard ring-theoretic ideals $\sr{I}(G/H) \subseteq T(G/H)$ for each open $H\leq G$, such that for all open subgroups $K\leq H$
\begin{enumerate}
    \item $\res{K}{H}(\sr{I}(G/H)) \subseteq \sr{I}(G/K)$,
    \item $\tr{K}{H}(\sr{I}(G/K)) \subseteq \sr{I}(G/H)$,
    \item $\N{K}{H}(\sr{I}(G/K)) \subseteq \sr{I}(G/H)$,
    \item $c_{g, H}(\sr{I}(G/H)) = \sr{I}(G/H^{g})$.
\end{enumerate}
Suppose we have a collection of subsets $\mathcal{G}(X)\subseteq T(X)$ for all finite $G$-sets $X$. The \textit{ideal generated by $\mathcal{G}$} is the intersection of all ideals $\mathscr I$ of $T$ with $\mathcal{G}(X)\subseteq \mathscr I(X)$ for all $X$, denoted $\gent{\mathcal{G}}$. If $\mathcal{G}=\{a_1,\dots,a_n\}$ is finite, then we write $\gent{\mathcal{G}}=\gent{a_1,\dots,a_n}$.
\end{definition}

Much like their ring-theoretic counterparts, Tambara ideals can provide insight into algebraic structure (see, for example, \cite[\S 4]{nakaoka:2011}). The definition of a prime Tambara ideal we give below is not the original definition of Nakaoka, but is equivalent \cite[Proposition 4.4]{nakaoka:2011}.

\begin{definition}\label{def:prime}
An ideal $\sr{P}$ of a Tambara functor ${T}$ is \emph{prime} if for any pair of ideals $\sr{I}$ and $\sr{J}$,
$$\sr{I}\sr{J} \subseteq \sr{P} \implies \sr{I}\subseteq\sr{P} \text{ or } \sr{J}\subseteq \sr{P},$$
where the product of ideals is defined as 
$$\sr{I}\sr{J}(X) = \{f_+(ab) \mid  f\in G\mathbf{Fin}(A, X) \text{ for some } A,\; a \in \sr{I}(A),\; b\in \sr{J}(A) \}.$$
\end{definition}

In practice, it can be difficult to prove a given Tambara ideal satisfies this condition. Nakaoka develops a criterion for determining when an ideal is prime in \cite[Proposition 4.2]{nakaoka:2014}, which we shall use extensively throughout this work.

\begin{definition}
\label{def:Q}
Let $\sr{I}$ be an ideal of $T$ and $H, H' \leq G$. Let $a \in T(G/H)$ and $b \in T(G/H')$. Define the proposition $Q(\sr{I}, a, b)$ by   
\begin{enumerate}
    \item[(*)] The relation
    $$\Big(\N{K^g}{L}\circ c_{g, K}\circ \res{K}{H}(a)\Big)
    \cdot\left(\N{K'^{g'}}{L} \circ c_{g', K'} \circ \res{K'}{H'}(b)\right) \in \sr{I}(G/L)$$
    holds for all $L, K, K' \leq G$ and $g, g' \in G$ satisfying $L \geq K^g, ~L \geq K'^{g'}, ~K\leq H,$ and $K'\leq H'$.
\end{enumerate}
\end{definition}

\begin{theorem}[{\cite[Proposition 4.2]{nakaoka:2014}}] \label{thm:nak_prime}
An ideal $\sr{P}$ of a Tambara functor $T$ is prime if and only if for any $a \in T(G/H)$ and $b \in T(G/H')$, the statement $Q(\sr{P}, a, b)$ implies
$$a\in \sr{P}(G/H) \text{ or } b\in \sr{P}(G/H').$$ 
\end{theorem}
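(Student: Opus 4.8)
I would prove this biconditional by working with the principal Tambara ideals $\gent a$ and $\gent b$ generated by the two elements in question.

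\emph{Necessity.} Assume $\sr P$ is prime and $Q(\sr P,a,b)$ holds, with $a\in T(G/H)$ and $b\in T(G/H')$. Set $\sr I=\gent a$ and $\sr J=\gent b$. The plan is to show $\sr I\sr J\subseteq\sr P$; primeness then forces $\gent a\subseteq\sr P$ or $\gent b\subseteq\sr P$, which is exactly the conclusion $a\in\sr P(G/H)$ or $b\in\sr P(G/H')$. The main preliminary step is a structural description of a principal Tambara ideal: $\gent a(G/L)$ is the ordinary ring ideal of $T(G/L)$ generated by the elements
\[
\tr{M}{L}\!\Big(x\cdot\N{K^g}{M}\,c_{g,K}\,\res{K}{H}(a)\Big)
\]
as $K\le H$, $g\in G$, $K^g\le M\le L$, and $x\in T(G/M)$ range, together with the additional terms forced by axiom (3) of \cref{defn:tamb ideal}. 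The content of this description is that iterating the Tambara operations produces nothing genuinely new: nested norms collapse via $\N{L}{M}\circ\N{K}{L}=\N{K}{M}$, a norm of a transfer expands by Tambara reciprocity (the exponential-diagram axiom) into a sum of transfers of norms, and restrictions and conjugations are absorbed using the Mackey double-coset formula for restrictions of transfers and its multiplicative analogue for restrictions of norms.

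\emph{Completing necessity.} Granting the description, a generic generator of $\sr I\sr J(X)$ (see \cref{def:prime}) is $f_+(\xi\eta)$ with $f\colon A\to X$, $\xi\in\gent a(A)$, and $\eta\in\gent b(A)$. Decomposing $A$ into $G$-orbits and expanding $\xi$ and $\eta$ via the description above, then multiplying and repeatedly applying the double-coset formulas, rewrites $\xi\eta$ as a finite sum of transfers $\tr{N}{L}(\,\cdot\,)$ of products, each of which contains a factor
\[
\Big(\N{K^g}{N}c_{g,K}\res{K}{H}(a)\Big)\cdot\Big(\N{K'^{g'}}{N}c_{g',K'}\res{K'}{H'}(b)\Big)
\]
with $K\le H$, $K'\le H'$, and $K^g,K'^{g'}\le N$. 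By the hypothesis $Q(\sr P,a,b)$ every such factor lies in $\sr P(G/N)$; since $\sr P$ is closed under multiplication by ring elements, under transfers, and under $f_+$, we conclude $f_+(\xi\eta)\in\sr P(X)$. Hence $\sr I\sr J\subseteq\sr P$, and primeness finishes the implication.

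\emph{Sufficiency.} Conversely, suppose the criterion holds and $\sr I\sr J\subseteq\sr P$; assume for contradiction that $\sr I\not\subseteq\sr P$ and $\sr J\not\subseteq\sr P$, and pick $a\in\sr I(G/H)\setminus\sr P(G/H)$ and $b\in\sr J(G/H')\setminus\sr P(G/H')$. Since the conclusion of the criterion fails for the pair $(a,b)$, the hypothesis $Q(\sr P,a,b)$ must fail, so there are $L,K,K',g,g'$ satisfying the constraints of \cref{def:Q} with
\[
\Big(\N{K^g}{L}c_{g,K}\res{K}{H}(a)\Big)\Big(\N{K'^{g'}}{L}c_{g',K'}\res{K'}{H'}(b)\Big)\notin\sr P(G/L).
\]
But $c_{g,K}\res{K}{H}(a)\in\sr I(G/K^g)$ and $c_{g',K'}\res{K'}{H'}(b)\in\sr J(G/K'^{g'})$ by axioms (1) and (4), so by axiom (3) each norm above lies in the respective ideal up to a term of the shape $\N{K^g}{L}(0)$ or $\N{K'^{g'}}{L}(0)$. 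For the Burnside functor $\burn$ one has $\N{K}{H}(0)=0$, so these norms lie genuinely in $\sr I(G/L)$ and $\sr J(G/L)$, and their product then lies in $\sr I(G/L)\cdot\sr J(G/L)\subseteq\sr P(G/L)$ — taking $f=\mathrm{id}_{G/L}$ in \cref{def:prime}, and using that $\sr P\supseteq\sr I\sr J$ is an ideal — which contradicts the display. In the general Tambara setting one dispatches the residual terms by Frobenius reciprocity together with the vanishing $\res{N}{L}\N{M}{L}(0)=0$. Either way we reach a contradiction, so $\sr I\subseteq\sr P$ or $\sr J\subseteq\sr P$, i.e.\ $\sr P$ is prime.

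\emph{Main obstacle.} The crux is the structural description of $\gent a$ and the attendant bookkeeping that converts a product of transferred norms into a sum of transfers of the ``conjugate--restrict--norm'' products appearing in $Q$: this is precisely where the exponential-diagram axiom (Tambara reciprocity) and the multiplicative double-coset formula are indispensable, and it is what makes the elaborate quantifier structure of \cref{def:Q} the correct hypothesis. A secondary technical nuisance is the $\N(0)$ terms in axiom (3), which vanish for the Burnside functor we ultimately care about but require a little extra care in the statement for a general Tambara functor.
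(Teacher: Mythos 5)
The paper does not prove this result; it is imported verbatim from Nakaoka (cited as \cite[Proposition 4.2]{nakaoka:2014}), so there is no ``paper's own proof'' to compare against. Your attempt is thus a reconstruction of Nakaoka's argument, and the overall plan is the right one: translate the element-level criterion into the ideal-theoretic definition of prime via the principal Tambara ideals $\gent{a}$ and $\gent{b}$, using $\gent{a}\subseteq\sr{P}\iff a\in\sr{P}(G/H)$.

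That said, the load-bearing step -- the structural description of $\gent{a}(G/L)$ as the ring ideal generated by $\tr{M}{L}\bigl(x\cdot\N{K^g}{M}c_{g,K}\res{K}{H}(a)\bigr)$ -- is asserted rather than proved, and it is genuinely where all the work lives. You correctly identify the tools (Mackey double-coset formulas for both $\res\circ\tr$ and $\res\circ\N$, Tambara reciprocity to expand $\N\circ\tr$, and the collapse of nested norms and restrictions), but without carrying out the closure check under each Tambara operation, the necessity direction is an outline rather than a proof. In particular, you would need to verify that this candidate collection is already stable under further norms, which is exactly where the exponential-diagram axiom gets exercised and where the ``sum of transfers of norms'' form must be shown to be preserved.

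On the $\N(0)$ bookkeeping: your treatment is more complicated than it needs to be, and the claim you invoke in the general case is justified for the wrong reason. In any Tambara functor one has $\N{K}{L}(0)=0$ outright, by initiality of $\burn_G$: the unique Tambara morphism $\burn_G\to T$ is a ring map commuting with norms, and $\N{K}{L}(\emptyset)=\emptyset$ in $\burn_G$. So the ``residual terms'' from axiom (3) of \cref{defn:tamb ideal} never arise, in the Burnside functor or otherwise, and the appeal to Frobenius reciprocity plus a purported identity $\res{N}{L}\N{M}{L}(0)=0$ is a detour -- that identity is true only because $\N{M}{L}(0)=0$ already. Stripping this out, your sufficiency argument is correct and clean: negating $Q$ produces a single $L$ with $\N{K^g}{L}(c_g\res a)\in\sr{I}(G/L)$ and $\N{K'^{g'}}{L}(c_{g'}\res b)\in\sr{J}(G/L)$, whose product lies in $\sr{I}\sr{J}(G/L)\subseteq\sr{P}(G/L)$ by taking $f=\mathrm{id}_{G/L}$ in \cref{def:prime}, contradicting the failure of $Q$.
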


For more details on Tambara ideals, we point the reader to \cite{nakaoka:2011}. The theory of Tambara ideals mirrors the theory of ideals of commutative rings. In particular the \emph{spectrum} of a Tambara functor is defined as the collection of all prime ideals under the Zariski topology. 

\begin{definition}\label{defn:spec}
The spectrum of a Tambara functor $T$, denoted $\Spec(T)$, is the collection of all prime ideals $\sr{P}$ of $T$. The topology on $\Spec(T)$ is given by the collection of closed sets of the form 
$$V(\sr{I}) = \{\sr{P} \in \Spec(T) \mid \sr{I} \subseteq \sr{P} \}$$
for all ideals $\sr{I}$ of $T$. 
\end{definition}

 In {\cite[Theorem 4.9]{nakaoka:2011}}, Nakaoka shows that a given surjective Tambara functor morphism $\varphi\colon T\to S$ induces a homeomorphism between $\Spec(S)$ and $V(\ker(\varphi))$ given by 
 $$\varphi_\#: V(\ker(\varphi))\rightarrow \Spec(S)$$
 $$\sr{P} \mapsto \varphi(\sr{P}).$$

\subsection{The Burnside ring, its prime spectrum, and its Tambara structure}
We are interested in determining the collection of prime ideals of the \textit{Burnside Tambara functor} $\burn$ on a finite group $G$. This section recalls some relevant definitions and results, in particular the work of A.~Dress \cite{dress:1971} on the spectrum of the Burnside ring on a finite group. 

\begin{definition} \label{defn:burnside ring}
The \textit{Burnside ring} of a finite group $G$ is the Grothendieck construction on the semi-ring of finite $G$-sets, denoted $A(G)$. That is, $A(G)$ is the ring of formal differences of isomorphism classes of finite $G$-sets, with addition given by disjoint union and multiplication given by Cartesian product.
\end{definition}

In \cite{dress:1971}, Dress shows that the
\textit{mark of $H$ on $X$}
\begin{equation}\label{eqn:mark}
\begin{aligned}
    \varphi_G^H\colon A(G) &\rightarrow \ZZ \\
    X &\mapsto \abs{X^H}
\end{aligned}
\end{equation}
is a ring homomorphism for all $H\leq G$. Here $X^H$ denotes the $H$-fixed points of $X$.  One can verify that the following claim holds:

\begin{lemma}\label{lem:phi_vals}
Let $G$ be an Abelian group and $H, I\leq G$. Then \begin{equation*}
\varphi_G^I(G/H) =  \left\{\begin{array}{cc}
         |G:H| & I \leq H \\
         0 & \text{otherwise}
         \end{array}\right.
\end{equation*}
\end{lemma}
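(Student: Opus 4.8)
The plan is to prove \cref{lem:phi_vals} by a direct fixed-point computation, using only the definition of the mark homomorphism $\varphi_G^I$ from \cref{eqn:mark} and elementary facts about transitive $G$-sets. Recall that $\varphi_G^I(G/H) = \abs{(G/H)^I}$, the number of cosets $gH$ fixed by the action of $I$. The condition $gH \in (G/H)^I$ means $IgH = gH$, i.e. $g^{-1}Ig \subseteq H$, i.e. $I^g \leq H$. So the first step is to observe that $\varphi_G^I(G/H)$ counts the cosets $gH$ such that $g^{-1}Ig \leq H$.

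\medskip

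Now I would specialize to $G$ Abelian. When $G$ is Abelian, $g^{-1}Ig = I$ for every $g \in G$, so the condition $I^g \leq H$ becomes simply $I \leq H$, independent of the coset representative $g$. Hence there are two cases. If $I \leq H$, then \emph{every} coset $gH$ is fixed by $I$, so $(G/H)^I = G/H$ and $\varphi_G^I(G/H) = \abs{G/H} = \abs{G:H}$. If $I \not\leq H$, then \emph{no} coset is fixed, so $(G/H)^I = \varnothing$ and $\varphi_G^I(G/H) = 0$. This is exactly the claimed formula, so the proof is essentially immediate once the fixed-point condition is unwound.

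\medskip

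There is really no substantial obstacle here; the lemma is a routine unwinding of definitions, and the only ``content'' is the standard identification of when a coset of a transitive $G$-set is fixed by a subgroup. The one point to be careful about is the direction of the conjugation convention (whether fixing $gH$ gives $g^{-1}Ig \leq H$ or $gIg^{-1} \leq H$), but since $G$ is Abelian this distinction evaporates. If one wanted to emphasize the Abelian hypothesis, one could note that for general $G$ the count is $\abs{\{gH : I^g \leq H\}}$, which need not collapse to an all-or-nothing dichotomy; the commutativity is precisely what forces the clean two-valued answer. I would write the proof in two or three sentences.
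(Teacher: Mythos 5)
Your proof is correct, and the paper itself offers none (it simply says ``one can verify that the following claim holds''), so your direct fixed-point computation---unwinding $gH \in (G/H)^I$ to $I^g \leq H$ and then using commutativity to collapse this to the $g$-independent condition $I \leq H$---is exactly the intended routine verification.
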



Moreover, for an arbitrary finite group $G$, a result due to Burnside  shows that the map $\varphi_G\colon A(G) \rightarrow \prod_{H\leq G} \ZZ$ given by \begin{equation}\label{eqn:phiG}
    \varphi_G = \prod_{H\leq G} \varphi_G^H
\end{equation} is an injective ring homomorphism.

\begin{definition}\label{def:phi}
Given a prime (or zero) $p$, let $\varphi_{G, p}^H\colon A(G) \rightarrow \ZZ/p\ZZ$ denote the composition of $\varphi_G^H$ and the usual quotient map $\ZZ \rightarrow \ZZ/p\ZZ$. 
\end{definition}

\begin{theorem}[{\cite[\S 5]{dress:1971}}] \label{thm:spec burn ring}
The Zariski spectrum of the Burnside ring is $$\Spec(A(G)) = \{\ker\varphi_{G, p}^H\mid H\leq G, ~p \text{ a prime or zero}\}.$$
\end{theorem}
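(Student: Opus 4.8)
The plan is to prove the two standard inclusions: every $\ker \varphi_{G,p}^H$ is a prime ideal of $A(G)$, and conversely every prime ideal of $A(G)$ arises this way. For the first direction, note that $\varphi_{G,p}^H$ is a ring homomorphism from $A(G)$ to the integral domain $\ZZ/p\ZZ$ (interpreting $\ZZ/0\ZZ = \ZZ$), so its kernel is automatically a prime ideal; this part requires only that $\varphi_G^H$ is a ring homomorphism, which was recalled in \eqref{eqn:mark}.

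For the converse, let $\fr{q} \in \Spec(A(G))$ and let $p = \mathrm{char}(A(G)/\fr{q})$, which is either $0$ or a prime since $A(G)/\fr{q}$ is an integral domain. The key structural input is that $\varphi_G = \prod_{H \leq G} \varphi_G^H \colon A(G) \to \prod_{H\leq G} \ZZ$ is \emph{injective}; consequently the product map is surjective after tensoring with $\QQ$ onto a finite product of copies of $\QQ$ (the marks give a $\QQ$-basis), and in particular $\prod_H \ker\varphi_G^H = \ker \varphi_G = 0$. I would first argue that for a product of integral domains, any prime ideal $\fr{q}$ of a subring $A(G) \subseteq \prod_H R_H$ whose complement is multiplicatively closed must ``project through'' a single factor: more precisely, the zero ideal of $\prod_H \ZZ$ is an intersection of the ideals $\{0\}\times\cdots\times \ZZ \times \cdots$ (one per factor), so $0 = \prod_H \ker\varphi_G^H \subseteq \fr{q}$ forces, by primeness of $\fr{q}$ in $A(G)$, that some finite product of the $\ker\varphi_G^H$ lies in $\fr{q}$, hence some single $\ker\varphi_G^H \subseteq \fr{q}$. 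The cleanest route is to use that $A(G)$ is finite over $\ZZ$ as a module, so $A(G)\otimes_\ZZ \QQ \cong \prod_H \QQ$ componentwise via the marks, which pins down each minimal prime of $A(G)$ as exactly a $\ker\varphi_G^H$; then $\fr{q}$ contains a unique minimal prime $\ker\varphi_G^H$, and passing to the quotient $A(G)/\ker\varphi_G^H \hookrightarrow \ZZ$ identifies $A(G)/\ker\varphi_G^H$ with a finite-index subring of $\ZZ$, whose primes are $(0)$ and the $(p)$ for $p$ prime. Pulling back along $A(G) \to A(G)/\ker\varphi_G^H \to \ZZ/p\ZZ$ shows $\fr{q} = \ker\varphi_{G,p}^H$.

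The main obstacle is the bookkeeping for the minimal primes: one must show that the minimal primes of $A(G)$ are precisely the ideals $\ker\varphi_G^H$ and, crucially, that the image of $A(G)$ in the $H$-th factor $\ZZ$ (that is, $A(G)/\ker\varphi_G^H$) has finite index, so that its spectrum is literally $\{(0)\} \cup \{(p) : p \text{ prime}\}$ rather than something smaller. Finiteness of $A(G)$ as a $\ZZ$-module (it is free of rank equal to the number of conjugacy classes of subgroups) handles this: the cokernel of $\varphi_G$ is a finite abelian group, so each coordinate projection of $A(G)$ is a full-rank, hence finite-index, subgroup of $\ZZ$. A secondary subtlety is well-definedness of the index $(H,p)$ — different pairs can give the same kernel — but the theorem statement only asserts the set equality, so uniqueness of the parametrization is not needed here. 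Finally, I would remark that \cref{lem:phi_vals}, while not logically required for this theorem, is what makes these computations explicit in the abelian case used later.
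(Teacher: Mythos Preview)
The paper does not actually prove this theorem; it is stated as background, attributed to Dress \cite{dress:1971}, with no argument supplied in the text. So there is no ``paper's own proof'' to compare against.

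Your argument is essentially correct and follows the classical line. One simplification: you worry about showing that $A(G)/\ker\varphi_G^H$ sits inside $\ZZ$ with finite index, but in fact $\varphi_G^H$ is a \emph{unital} ring homomorphism (it sends the trivial $G$-set $G/G$ to $|(G/G)^H|=1$), so its image is all of $\ZZ$, not merely a finite-index subring. This makes the cokernel-finiteness discussion unnecessary. With that in hand the proof is clean: the finitely many kernels $\ker\varphi_G^H$ have trivial intersection by injectivity of $\varphi_G$, so their product is contained in any prime $\fr{q}$, whence primeness forces some $\ker\varphi_G^H\subseteq\fr{q}$; then $\fr{q}/\ker\varphi_G^H$ is a prime of $A(G)/\ker\varphi_G^H\cong\ZZ$, i.e.\ equals $(p)$ for some prime-or-zero $p$, and pulling back gives $\fr{q}=\ker\varphi_{G,p}^H$.
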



Dress also formulates some containment results, which are nicely summarized in \cite[Theorem 3.6]{balmer/sanders:2017}.

\begin{theorem}\label{thm:dress containment}
Let $H,K\leq G$ and $p,q$ be primes. Then\begin{enumerate}
    \item[(i)] $\ker(\varphi_{G,0}^H)\subseteq \ker(\varphi_{G,0}^{K})$ if and only if $\ker(\varphi_{G,0}^H)= \ker(\varphi_{G,0}^{K})$ if and only if $H=K^g$ for some $g\in G$;
    \item[(ii)] $\ker(\varphi_{G,p}^H)\subseteq \ker(\varphi_{G,q}^{K})$ implies $p=q$ and $\ker(\varphi_{G,p}^H)= \ker(\varphi_{G,p}^{K})$;
    \item[(iii)] $\ker(\varphi_{G,0}^H)\subseteq \ker(\varphi_{G,p}^{K})$ if and only if $\ker(\varphi_{G,p}^H)= \ker(\varphi_{G,p}^{K})$ if and only if $O^p(H) = (O^p(K))^g$ for some $g\in G$, where $O^p(H)$ is defined below in \cref{defn:OpH};
    \item[(iv)] $\ker(\varphi_{G,0}^H)\subset \ker(\varphi_{G,p}^{H})$ and $\ker(\varphi_{G,p}^H)\not\subseteq \ker(\varphi_{G,0}^{K})$.
\end{enumerate}
\end{theorem}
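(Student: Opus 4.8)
The plan is to split the statement into a formal part, settled by two elementary observations about the marks, and the one genuinely substantive implication --- the ``$\Rightarrow$'' direction of (iii) --- which I would reduce to Dress's description of the idempotents of $\ZZ_{(p)}\otimes A(G)$.

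I would begin with two observations. First, $\varphi_{G,p}^H$ is $\varphi_G^H$ followed by reduction mod $p$, so $\ker\varphi_{G,0}^H=\ker\varphi_G^H\subseteq\ker\varphi_{G,p}^H$; and since $\varphi_G^H(1)=1$ the map $\varphi_{G,p}^H$ is onto the field $\FF_p$, so $\ker\varphi_{G,p}^H$ is maximal. Second, $p\cdot 1\in A(G)$ has every mark equal to $p$, so it lies in each $\ker\varphi_{G,p}^H$ but in no $\ker\varphi_{G,0}^K$. The second observation immediately yields (iv) --- the containment $\ker\varphi_{G,0}^H\subsetneq\ker\varphi_{G,p}^H$ is strict because of $p\cdot 1$, and $\ker\varphi_{G,p}^H\not\subseteq\ker\varphi_{G,0}^K$ for the same reason --- and it yields (ii): $\ker\varphi_{G,p}^H\subseteq\ker\varphi_{G,q}^K$ forces $p\cdot 1\in\ker\varphi_{G,q}^K$, hence $q\mid p$ and $p=q$, after which a containment of maximal ideals is an equality. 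For (i) I would use the classical fact that rationalizing the table of marks gives a ring isomorphism $\QQ\otimes A(G)\xrightarrow{\sim}\prod_{(H)}\QQ$ whose $(H)$-component is $\varphi_G^H\otimes\QQ$, together with the fact that $\varphi_G^H$ depends only on the conjugacy class of $H$. Since $\QQ\otimes A(G)$ is the localization of $A(G)$ at $\ZZ\setminus\{0\}$ and, by \cref{thm:spec burn ring}, the primes of $A(G)$ disjoint from $\ZZ\setminus\{0\}$ are exactly the $\ker\varphi_{G,0}^H$, contraction makes $\{\ker\varphi_{G,0}^H\}$ order-isomorphic to $\Spec\big(\prod_{(H)}\QQ\big)$, which is a pairwise-incomparable finite set indexed bijectively by conjugacy classes; this gives both the incomparability and the equality criterion in (i).

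For (iii) the first step is a $p$-group fixed-point congruence: $O^p(H)\trianglelefteq H$ with $p$-group quotient acting on $X^{O^p(H)}$ with fixed set $X^H$, so $|X^H|\equiv|X^{O^p(H)}|\pmod p$ for every finite $G$-set $X$; hence $\varphi_{G,p}^H=\varphi_{G,p}^{O^p(H)}$ and $\ker\varphi_{G,p}^H=\ker\varphi_{G,p}^{O^p(H)}$. This gives the ``$\Leftarrow$'' directions of (iii): if $O^p(H)$ and $O^p(K)$ are $G$-conjugate their marks agree, so $\ker\varphi_{G,p}^H=\ker\varphi_{G,p}^K$, and then $\ker\varphi_{G,0}^H\subseteq\ker\varphi_{G,p}^H=\ker\varphi_{G,p}^K$. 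For the converses I would invoke Dress's theorem \cite{dress:1971} (see also \cite{balmer/sanders:2017}): $\ZZ_{(p)}\otimes A(G)$ is a finite product $\prod_{(S)}R_S$ of \emph{local} rings, indexed by the conjugacy classes of $p$-perfect subgroups $S$ (those with $O^p(S)=S$), each $R_S$ with residue field $\FF_p$. Combining this with the two observations and \cref{thm:spec burn ring}, the maximal ideals of $\ZZ_{(p)}\otimes A(G)$ are precisely the $\ker\varphi_{G,p}^H=\ker\varphi_{G,p}^{O^p(H)}$, so $S\mapsto\ker\varphi_{G,p}^S$ is a surjection from $p$-perfect conjugacy classes onto maximal ideals (one per local factor $R_S$) between finite sets of the same size --- hence a bijection, which is the ``$\Rightarrow$'' of the second equivalence. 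For the first equivalence, if $\ker\varphi_{G,0}^H\subseteq\ker\varphi_{G,p}^K$ then, since a prime containing another must lie with it in a single factor, both sit in some $R_S$; that factor's maximal ideal is $\ker\varphi_{G,p}^K$ (so $S$ is conjugate to $O^p(K)$), while $\ker\varphi_{G,0}^H\subseteq\ker\varphi_{G,p}^H=\ker\varphi_{G,p}^{O^p(H)}$ places $\ker\varphi_{G,0}^H$ in $R_{O^p(H)}$ (so $S$ is conjugate to $O^p(H)$); therefore $O^p(H)$ and $O^p(K)$ are conjugate and $\ker\varphi_{G,p}^H=\ker\varphi_{G,p}^K$. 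Together with the strict containment from (iv) this finishes the theorem. (Here $O^p(H)$ is the smallest normal subgroup of $H$ with $p$-power index, as in \cref{defn:OpH}.)

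The main obstacle is Dress's decomposition of $\ZZ_{(p)}\otimes A(G)$ --- equivalently, the fact that its primitive idempotents correspond to the conjugacy classes of $p$-perfect subgroups. I would cite this rather than reprove it; a self-contained argument starts from Gluck's idempotent formula $e_S^G=|N_G(S)|^{-1}\sum_{T\leq S}|T|\,\mu(T,S)[G/T]$ for the idempotent of $\QQ\otimes A(G)$ attached to the class of $S$, and then shows that for $p$-perfect $S$ the block sum $\sum_{O^p(T)\sim_G S}e_T^G$ already has coefficients in $\ZZ_{(p)}$ --- a congruence on subgroup-lattice Möbius numbers modulo powers of $p$. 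That integrality statement is the only nontrivial ingredient; everything else is bookkeeping around \cref{thm:spec burn ring}.
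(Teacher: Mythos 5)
The paper never proves \cref{thm:dress containment}; it is stated as a known result of Dress, with a pointer to \cite[Theorem 3.6]{balmer/sanders:2017} for a modern summary, and the text proceeds directly to \cref{defn:OpH}. So there is no in-paper argument to compare against, and your proposal has to stand on its own.

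It does. The $p\cdot 1=p[G/G]$ observation (every mark equals $p$) disposes of (iv) outright and reduces (ii) to the maximality of $\ker\varphi_{G,p}^H$, which follows from $\varphi_{G}^H(1)=1$ making $\varphi_{G,p}^H$ surjective onto $\FF_p$. Identifying $\QQ\otimes A(G)\cong\prod_{(H)}\QQ$ with the localization of $A(G)$ at $\ZZ\setminus\{0\}$, together with \cref{thm:spec burn ring}, gives the pairwise incomparability and conjugacy criterion in (i). The $p$-group fixed-point congruence $|X^H|\equiv |X^{O^p(H)}|\pmod p$ yields $\varphi_{G,p}^H=\varphi_{G,p}^{O^p(H)}$ and hence the two ``if'' directions of (iii). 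For the ``only if'' directions you invoke Dress's local decomposition of $\ZZ_{(p)}\otimes A(G)$ indexed by conjugacy classes of $p$-perfect subgroups; this is genuinely the hard ingredient and is reasonable to cite, and the remaining bookkeeping (comparable primes of a finite product lie in the same local factor; surjection between equinumerous finite sets is a bijection) is standard and correct. Two points worth tightening if you were to write this out: first, make explicit why every maximal ideal of $\ZZ_{(p)}\otimes A(G)$ has the form $\ker\varphi_{G,p}^H$ (only $\ker\varphi_{G,0}^H$ and $\ker\varphi_{G,p}^H$ survive localization at $p$, and among these only the latter are maximal), since your counting argument needs this surjectivity before it can conclude bijectivity; second, the closing sketch of proving the decomposition from Gluck's idempotent formula leaves the $\ZZ_{(p)}$-integrality of the block-summed idempotent as real work, so in practice one should cite Dress or Balmer--Sanders for it exactly as the paper does.
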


\begin{definition}\label{defn:OpH}
For any subgroup $H \leq G$ and $p$ prime, let $O^p(H)$ denote the  unique normal $p$-perfect subgroup such that the quotient $H/O^p(H)$ is a $p$-group. That is,
\[
    O^p(H) = \bigcap I,
    \]
where the intersection is over all $I\trianglelefteq H$ such that $|H:I| = p^n$ for some $n\in\NN$.
\end{definition}

\begin{remark}\label{rmk:OpH when G Ab}
When $G$ is Abelian, $O^p(H)$ is the part of $H$ which is relatively prime to $p$.
\end{remark}

\begin{remark}\label{rmk:ker H and OpH}
Note that $X^H=(X^{O^p(H)})^{H/O^p(H)}$ and so \[
\abs{X^H} = \abs{(X^{O^p(H)})^{H/O^p(H)}} \equiv \abs{X^{O^p(H)}} \pmod{p}
\] since $H/O^p(H)$ is a $p$-group. This implies $\ker(\phi_{G,p}^H)= \ker(\phi_{G,p}^{O^p(H)})$.
\end{remark}

Our goal is to extend these results to the context of Tambara functor theory.
When equipped with appropriate transfer, norm, and restriction maps, the Burnside ring admits a Tambara structure, and in fact is the initial object in the category of Tambara functors.

\begin{definition}[The Burnside Tambara functor] \label{defn:burnside tamb functor}
For each $H\leq G$, set $\burn_G(G/H)=A(H)$. For subgroups $K\leq H\leq G$, $g\in G$, $Y\in \burn_G(G/H)$, and $X\in\burn_G(G/K)$, we define the Tambara structure maps,\begin{align*}
    \res{K}{H}\colon \burn_G(G/H)&\longrightarrow \burn_G(G/K)\\
    Y &\longmapsto Y\text{ with restricted $K$-action,}\\
    \tr{K}{H}\colon \burn_G(G/K)&\longrightarrow \burn_G(G/H)\\
    X &\longmapsto H\times_K X,\\
    \N{K}{H}\colon \burn_G(G/K)&\longrightarrow \burn_G(G/H)\\
    X &\longmapsto {\rm Map}_K(H,X),\\
    c_{g,H}\colon \burn_G(G/H)&\longrightarrow \burn_G(G/H^g)\\
    X &\longmapsto X^g.
\end{align*}
These definitions are only valid for actual $G$-sets, as opposed to arbitrary elements of the Burnside Rings which may be formal differences of $G$-sets. However, they can be extended to the entirety of the respective Burnside Rings via a procedure due to Dress which can be found in \cite{curtis:1990}. When $G$ is Abelian, conjugation is trivial. These maps turn $\burn_G$ into the initial $G$-Tambara functor (cf. \cite{nakaoka:2014, tambara:1993}). We will denote $\burn_G$ by merely $\burn$ when the group is obvious from context.
\end{definition}

\begin{remark}\label{rmk:isom AG and AH}
For $K\leq H\leq G$, there is an isomorphism $\burn_G(G/K)\cong \burn_H(H/K)$, and we will often identify the two through this natural isomorphism (cf. \cite[Remark 1.5]{nakaoka:2014}).
\end{remark}

In general, it can be difficult to find simplified descriptions for the Tambara structure maps.
However, when $G$ is Abelian, we can directly obtain a formula for the norm in $\burn_G$. The following formula is due to Nakaoka in \cite[Appendix]{nakaoka:2014}.

\begin{theorem}[Nakaoka Norm Formula]
\label{thm:nak_norm}
Let $G$ be an Abelian group and $X = \sum_{I\leq H} m_I H/I \in \burn_G(G/H)$. Then 
$$\N{H}{G}(X) = \sum_{K \leq G} \frac{C(K)}{\abs{G:K}}G/K,$$
where $C(K)$ is defined inductively by
$$C(K) = \left(\sum_{K\cap H \leq I \leq H} m_I\abs{H:I}\right)^{\abs{G:KH}} - \sum_{K < L \leq G}C(L),$$ with the base case $C(G)=m_H$. 
\end{theorem}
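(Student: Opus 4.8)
The plan is to reduce the identity to a computation of marks. Since the total mark homomorphism $\varphi_G = \prod_{K\le G}\varphi_G^K$ is injective \eqref{eqn:phiG}, it suffices to check that the two sides of the claimed formula have the same image under each $\varphi_G^K$, $K\le G$. On the right-hand side this is a one-line consequence of \cref{lem:phi_vals}: because $\varphi_G^K(G/K') = \abs{G:K'}$ when $K\le K'$ and $0$ otherwise,
\begin{equation*}
\varphi_G^K\!\left(\sum_{K'\le G}\frac{C(K')}{\abs{G:K'}}\,G/K'\right) = \sum_{K\le K'\le G}C(K').
\end{equation*}
On the other hand, the inductive definition of $C$ rearranges to exactly $\sum_{K\le K'\le G}C(K') = \big(\sum_{K\cap H\le I\le H}m_I\abs{H:I}\big)^{\abs{G:KH}}$, the base case $C(G)=m_H$ being the instance $K=G$. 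So the theorem is equivalent to the \emph{mark formula}
\begin{equation*}
\varphi_G^K\big(\N{H}{G}(X)\big) = \Big(\sum_{K\cap H\le I\le H}m_I\,\abs{H:I}\Big)^{\abs{G:KH}}\qquad\text{for all }K\le G. \tag{$\star$}
\end{equation*}
(In particular $(\star)$ forces $\abs{G:K}\mid C(K)$, so the asserted expression genuinely lies in $A(G)$.)

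To prove $(\star)$, I would first treat the case of an honest $G$-set $X$, i.e.\ all $m_I\ge 0$, where $\varphi_G^K(\N{H}{G}(X)) = \abs{\N{H}{G}(X)^K} = \abs{{\rm Map}_H(G,X)^K}$ with $G$ carrying its residual translation action. An $H$-map $G\to X$ is fixed by $K$ precisely when it is constant on $K$-cosets, so ${\rm Map}_H(G,X)^K \cong {\rm Map}_H(G/K,X)$ with $G/K$ viewed as a left $H$-set. Since $G$ is Abelian, $HK$ is a subgroup, the $H$-stabilizer of each coset in $G/K$ is $H\cap K$, and a short count gives $G/K\cong\coprod^{\abs{G:KH}}H/(H\cap K)$ as $H$-sets; as ${\rm Map}_H(-,X)$ sends disjoint unions to products and ${\rm Map}_H(H/(H\cap K),X)\cong X^{H\cap K}$, we get $\varphi_G^K(\N{H}{G}(X)) = \abs{X^{H\cap K}}^{\abs{G:KH}} = \varphi_H^{H\cap K}(X)^{\abs{G:KH}}$. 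Finally $\varphi_H^{H\cap K}(X)=\sum_{I\le H}m_I\varphi_H^{H\cap K}(H/I)=\sum_{K\cap H\le I\le H}m_I\abs{H:I}$ by \cref{lem:phi_vals} applied in the Abelian group $H$, which is $(\star)$.

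It then remains to drop the positivity assumption. Here I would argue that, for fixed $H$ and $K$, both sides of $(\star)$ are polynomial functions of the tuple $(m_I)_{I\le H}$ — the right-hand side visibly, and the left-hand side because Dress's extension of $\N{H}{G}$ to virtual sets is given by a polynomial law and $\varphi_G^K$ is a ring homomorphism — and two polynomials on $\ZZ^{\{I\le H\}}$ that agree on the sub-semigroup of nonnegative tuples agree identically. I expect the fixed-point computation of the middle paragraph — correctly identifying the residual $G$-action on ${\rm Map}_H(G,X)$ and the resulting $H$-set structure of $G/K$ — to be the step demanding the most care, with the polynomiality argument for virtual $X$ the second most delicate point; the rest is bookkeeping with \cref{lem:phi_vals} and the recursion defining $C$.
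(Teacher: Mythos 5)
Your argument is correct, and there is no in-paper proof to compare it against: the paper states \cref{thm:nak_norm} and simply cites it to the appendix of Nakaoka's paper. Your route — reduce to marks via injectivity of $\varphi_G$, unwind the recursion for $C$ into the single identity $(\star)$, compute $\abs{{\rm Map}_H(G,X)^K}$ by decomposing $G/K$ as an $H$-set, then transport to virtual $X$ — is the natural one, and the orbit count is right: Abelianity forces every $H$-orbit in $G/K$ to have stabilizer $H\cap K$, hence $G/K\cong\coprod^{\abs{G:HK}}H/(H\cap K)$ and $\abs{{\rm Map}_H(G,X)^K}=\abs{X^{H\cap K}}^{\abs{G:HK}}$. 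Two remarks are worth recording. First, your identity $(\star)$, $\varphi_G^K(\N{H}{G}(X))=\varphi_H^{K\cap H}(X)^{\abs{G:KH}}$, is exactly the Abelian specialization of the paper's own \cref{lem:phi_res}(iv): there $K^g=K$, so every factor of $\prod_{[g]\in K\setminus G/H}\varphi_H^{K^g\cap H}(X)$ equals $\varphi_H^{K\cap H}(X)$, and $\abs{K\setminus G/H}=\abs{G:KH}$. Since that lemma is already stated for arbitrary virtual $X$ (the paper handles the extension by citing Yoshida), invoking it directly would have let you bypass both the fixed-point computation and your closing polynomiality step. Second, that polynomiality step is sound in principle — Dress's multiplicative extension of $\N{H}{G}$ is indeed a polynomial law, so $\varphi_G^K\circ\N{H}{G}$ is a $\ZZ$-coefficient polynomial in the $(m_I)_{I\le H}$, and two such polynomials agreeing on $\NN^{\{I\le H\}}$ agree on $\ZZ^{\{I\le H\}}$ — but as written it leans on that unproved fact about the Dress construction, so a careful version should either cite it explicitly (e.g.\ via the Tambara reciprocity expansion of $\N{H}{G}(a+b)$) or, more economically, appeal to \cref{lem:phi_res}(iv) and skip the issue entirely.
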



\section{A Collection of Prime Ideals}\label{sec:collection of prime ideals}

Remarkably, by intersecting the ideals found by Dress, one can derive a family of prime ideals of the Burnside Tambara functor which is also indexed by pairs consisting of a subgroup and a number that is either prime or $0$.

\begin{definition}\label{def:the_ideals}
Let $G$ be a finite group and $\burn_G$ the Burnside Tambara functor on $G$. Let $C$ be a subgroup of $G$ and $p$ a prime or zero. For each $H\leq G$ define 
$$\fr{p}_{C, p}(G/H) = \bigcap_{\substack{I \leq H \\ I \preccurlyeq_G C}} \ker(\varphi_{H, p}^I)$$
where $I \preccurlyeq_G C$ means that there exists some $g \in G$ such that $I^g \leq C$. 
\end{definition}

The remainder of this section will be devoted to proving that for all $C\leq G$ and $p$ a prime or zero, $\fr{p}_{C, p}$ is a prime ideal of $\burn_G$. This goal will be easier to achieve by switching our perspective on the Burnside Tambara functor. Namely, we will use the $\varphi_H$ from \cref{eqn:phiG} to construct a monomorphism from $\burn_G$ to another Tambara functor and study the $\mathfrak{p}_{C,p}$ in the context of this other Tambara functor instead. 

As a first step, we observe how the mark homomorphisms interact with the structure maps of the Burnside Tambara functor. These formulas will guide our formulation of the structure maps for the new Tambara functor.

\begin{lemma}\label{lem:phi_res}
Let $X$ be a finite $H$-set for $H\leq G$. Then \begin{enumerate}
    \item[(i)] for all $I\leq K\leq H$, we have $\varphi_K^I(\res{K}{H}(X)) = \varphi_H^I(X)$;
    \item[(ii)] for all $g \in G$ and $I\leq H^g$, we have $\varphi^I_{H^g}(c_{g, H}(X)) = \varphi_H^{I^{g^{-1}}}(X)$;
    \item[(iii)] for all $I\leq G$, we have $$\varphi_G^I(\tr{H}{G}(X)) = \sum_{[g] \in G/H,\; I^g\leq H}\varphi_H^{I^g}(X);$$
    \item[(iv)] for all $I\leq G$, we have
\[\varphi_G^I(\N{H}{G}(X)) = \prod_{[g] \in I\setminus G/H} \varphi_H^{I^g \cap H}(X)
\]
\end{enumerate}
\end{lemma}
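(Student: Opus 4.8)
The plan is to verify each formula by unwinding the definitions of the Tambara structure maps in \cref{defn:burnside tamb functor} together with the mark homomorphism \eqref{eqn:mark}, reducing everything to a count of fixed points of an explicitly described $G$-set (or $K$-set). Since all four maps are additive or at least well-behaved on disjoint unions, and the marks $\varphi$ are ring homomorphisms hence additive, it suffices to prove each identity on an actual finite $G$-set $X$ (not a virtual one); the extension to the whole Burnside ring is then automatic by the Dress procedure cited in \cref{defn:burnside tamb functor}. Throughout I will freely use \cref{rmk:isom AG and AH} to identify $\burn_G(G/H)$ with $A(H)$.

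For (i): restriction along $K \leq H$ sends an $H$-set $X$ to the same underlying set with $K$ acting by restriction, so $(\res{K}{H}X)^I = X^I$ as sets whenever $I \leq K$; taking cardinalities gives $\varphi_K^I(\res{K}{H}X) = \varphi_H^I(X)$. For (ii): conjugation sends $X$ (an $H$-set) to the set $X^g$ on which $H^g = g^{-1}Hg$ acts via $h \cdot x := (ghg^{-1})x$; one checks directly that $x \in (X^g)^I$ iff $x \in X^{gIg^{-1}} = X^{I^{g^{-1}}}$, and comparing cardinalities yields the claim. Both of these are essentially formal and should take only a line or two each.

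For (iii), the transfer $\tr{H}{G}X = G \times_H X$; its $I$-fixed points are the elements $[g,x]$ with $g'[g,x]=[g,x]$ for all $g' \in I$, which forces $g^{-1}g'g \in H$ and $(g^{-1}g'g)x = x$. Decomposing $G = \coprod_{[g]\in G/H} gH$, the $I$-fixed points that "live over" the coset $gH$ are nonempty precisely when $I^g = g^{-1}Ig \leq H$, and in that case they are in bijection with $X^{I^g}$; summing over cosets gives exactly the stated formula $\varphi_G^I(\tr{H}{G}X) = \sum_{[g]\in G/H,\ I^g\leq H}\varphi_H^{I^g}(X)$. (One should note the sum is well-defined because replacing $g$ by $gh$ replaces $I^g$ by a conjugate of it inside $H$, and $X^{I^g}$ and $X^{(I^g)^h}$ have the same cardinality.) For (iv), the norm $\N{H}{G}X = \mathrm{Map}_H(G,X)$, the set of $H$-equivariant maps $G \to X$ with $G$ acting by precomposition with right translation; an element $\phi$ is $I$-fixed iff $\phi(g' g) = \phi(g)$ for all $g'\in I$, i.e.\ $\phi$ factors through $I\backslash G$. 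Choosing a transversal for the double cosets $I\backslash G / H$, an $I$-fixed equivariant map is determined freely and independently on each double coset $IgH$, and on the double coset through $g$ the possible values are exactly the $(g^{-1}Ig \cap H)$-fixed points $X^{I^g \cap H}$ (here one uses that the stabilizer conditions from the $I$-action on the left and the $H$-equivariance on the right combine to cut out precisely $I^g \cap H$). Taking cardinalities turns the disjoint (independent) choices into a product, giving $\varphi_G^I(\N{H}{G}X) = \prod_{[g]\in I\backslash G/H} \varphi_H^{I^g\cap H}(X)$.

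The main obstacle is the bookkeeping in parts (iii) and (iv): getting the double-coset indexing exactly right, checking that the summand/factor attached to a double coset is genuinely independent of the chosen representative $g$, and verifying in (iv) that the relevant point-stabilizer is $I^g \cap H$ rather than some other intersection. These are standard orbit-counting / Mackey-type arguments (the transfer formula is essentially Mackey's double coset formula for the permutation module, and the norm formula is its multiplicative analogue), so no genuinely new idea is needed, but the indices and conjugations must be tracked carefully. I would organize the write-up by first proving (i) and (ii), then deducing the "well-definedness up to the sum/product over representatives" as a small sublemma, and finally doing the fixed-point counts for (iii) and (iv).
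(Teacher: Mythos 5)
Your proof is correct, and for parts (i)--(iii) it follows essentially the same route as the paper: (i) and (ii) are noted as formal, and (iii) is done by a direct fixed-point count on $G\times_H X$ (the paper counts pairs $(g,x)$ with $I^g\leq H$ and $x\in X^{I^g}$ and divides by $|H|$, whereas you decompose $G=\coprod gH$ first, but these are the same calculation packaged differently, and you correctly flag the representative-independence issue that the paper glosses over). The genuine difference is part (iv): the paper does not prove the norm-mark formula directly but instead cites adjointness and the Mackey formula from Yoshida, whereas you give an explicit fixed-point count on $\mathrm{Map}_H(G,X)$, decomposing over double cosets $IgH$ and identifying the admissible values of an $I$-fixed $H$-equivariant map at a representative $g$ with $X^{I^g\cap H}$. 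This is the multiplicative analogue of your argument in (iii), and the stabilizer computation ($g'_1gh_1=g'_2gh_2$ forcing $h_2h_1^{-1}\in I^g\cap H$ and hence $\phi(g)\in X^{I^g\cap H}$) is correct. Your direct argument is more self-contained and makes the parallel with (iii) transparent, at the cost of a bit more double-coset bookkeeping; the paper's citation is terser but leaves the reader to unwind the adjunction themselves. Either is acceptable.
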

\begin{proof}
The first two claims are straightforward to verify. 
To show the third, note that by definition, $\tr{H}{G}(X) = G\times_H X$ with a $G$-action given by $g\cdot [(g', x)] = [(gg', x)]$ for all $g, g' \in G$ and $x\in X$. Suppose some element $[(g, x)]$ is fixed by $I$. Then for all $i\in I$, we have that $[(ig, x)] = [(g, x)]$, so there is some $h\in H$ such that both $igh^{-1} = g$ and $h\cdot x = x$. Now the first condition says that $h =  g^{-1}ig$, therefore we must have that $I^g \leq H$ and $x$ is fixed by $I^g$. These are necessary conditions and are also clearly sufficient, thus the fixed points of $I$ are precisely the elements of the form $(g, x)$ where $I^g \leq H$ and $x$ is fixed by $I^g$. Since each equivalence class has $\abs{H}$ elements, there are precisely
$$\left(\sum_{\substack{g \in G \\ I^g\leq H}}\varphi_H^{I^g}(X)\right)/\abs{H} $$
such elements. Now, if $I^g \leq H$ then $I^{gh}\leq H$ for all $h\in H$, so we can instead sum over the cosets $G/H$ and obtain the expression in the theorem statement. This proves part (iii). The formula for (iv) follows from adjointness and the Mackey formula (see \cite[p.39]{yoshida:1990}).
\end{proof}

It is known (cf. \cite{boltje/danz:2012}) that the Burnside ring $A(G)$ can be identified with the image of $\varphi_G\colon A(G)\to \prod_{H\leq G}\ZZ$. In particular, the image of $\varphi_G$ is contained in the \textit{ghost ring} of $A(G)$, often denoted $\tilde A(G)$, which is the sub-ring of $G$-fixed points of $\prod_{H\leq G}\ZZ$ (where the $G$-action permutes the coordinates according to the conjugation action of $G$ on its subgroups). Is it this ghost ring that we will generalize to the context of Tambara functors. We note that this idea (and much more) is discussed in \cite{strickland:2012}, and in particular \cite[\S 19]{strickland:2012} covers connections between Tambara functors and the Witt vector ghost map.

\begin{definition}\label{cor:structure map fmlas}\label{def:ghost} 
Just as with Burnside ring, the ghost rings $\tilde{A}(H)$ for $H\leq G$ assemble into a Tambara functor $\burnghost_G$ which we will call the \textit{ghost Tambara functor} on $\burn_G$. The Tambara structure maps are defined as follows:
For all subgroups $H\leq K \leq G$, elements $a \in \burnghost_G(G/H)$, $b \in\burnghost_G(G/K)$, and $g\in G$, the restriction, transfer, norm, and conjugation maps in $\burnghost_G$ are given by
\begin{align*}
    \res{H}{K}(b) &= (b_I)_{I\leq H}\\
    \tr{H}{K}(a) &= \left(\sum_{\substack{[k] \in K/H\\ I^k\leq H}}a_{I^k}\right)_{I\leq K}\\
    \N{H}{K}(a) &= \left(\prod_{[g] \in I\setminus K/H}
    a_{I^g \cap H}\right)_{I\leq K}\\
    c_{g, H}(a) &= \left(a_{I^{g^{-1}}}\right)_{I\leq H^g}
\end{align*}
The fact that this defines a Tambara functor follows from the observation that $\burnghost_G$ lives inside the fixed-point Tambara functor on the ring $\prod_{K\leq G}\ZZ$. Specifically, we can extend a tuple $(a_I)_{I\leq H}$ in $\burnghost_G(G/H)$ to a tuple in $(\prod_{K\leq G}\ZZ)^H$ by inserting $0$s at all the $K$-coordinates for $K\not\leq H$, and since the structure maps agree we get all the Tambara structure for free. 
\end{definition}
\begin{remark}
For an Abelian group, the above expressions simplify greatly:
\begin{align*}
    \res{H}{K}(b) &= (b_I)_{I\leq H},\\
    \tr{H}{K}(a) &= (|K:H|a_I)_{I\leq K},\\
    \N{H}{K}(a) &= \left(a_{I\cap H}^{|K:HI|}\right)_{I\leq K},\\
    c_{g, H}(a) &= a.
\end{align*}
\end{remark}

Let $\varphi = \{\varphi_H\}_{H\leq G}$, where $\varphi_H$ is as defined in \cref{eqn:phiG}. Then $\varphi\colon \burn_G\to \burnghost_G$ defines an injective morphism of Tambara functors, and we can identify $\burn_G$ with its image, which we will denote by $\burn_G^\varphi$.

\begin{remark}\label{thm:iso_rep}
We can do this sort of construction more generally for any Tambara functor $T$ on a group $G$. Given a collection $f = \{f_H\}_{H\leq G}$ of injective ring homomorphisms $f_H\colon T(G/H) \rightarrow R_H$ for some rings $R_H$, we can build a new Tambara functor $T^f$. For each $H\leq G$ define $T^f(G/H) = {\rm im}(f_H)$, and for each $H\leq K\leq G$ and $g\in G$ define maps
\begin{align*}
    \res{H}{K} &= f_H\circ\res{H}{K}\circ f_K^{-1}\\
    \tr{H}{K} &= f_K\circ\tr{H}{K}\circ f_H^{-1}\\
    \N{H}{K} &= f_K\circ\N{H}{K}\circ f_H^{-1}\\
    c_{g, H} &= f_{H^g}\circ c_{g, H} \circ f^{-1}_H\\
\end{align*} 
where res, tr, N, and $c$ on the right refer to the restriction, transfer, norm and conjugation maps in $T$. By pulling the necessary commutative diagrams outlined in \cite{mazur:2019} back along these isomorphisms, we get that $T^f$ is a Tambara functor and $f\colon T\rightarrow T^f$ is a Tambara functor isomorphism.
\end{remark}

Given  $C\leq G$ and $p$ a prime or zero define $\sr{K}_{C, p}$ by 
$$\sr{K}_{C, p}(G/H) = \left(\prod_{I\leq H} \chi_{C, p}(I)\right ) \cap \tamb{A}_G^\varphi(G/H)$$
where
$$\chi_{C, p}(I) = \left\{\begin{array}{cc} (p) & I \preccurlyeq_G C \\
         \ZZ & {\rm otherwise.}
         \end{array}\right.$$

\begin{remark}\label{rmk:P and K equiv}
Essentially by construction, we have $\varphi(\fr{p}_{C, p}) = \sr{K}_{C, p}$. Hence to show that $\fr{p}_{C, p}$ is a prime ideal of the Burnside Tambara functor, we can instead prove the equivalent statement that $\sr{K}_{C, p}$ is a prime ideal of $\burn_G^\varphi$.
\end{remark}

\begin{proposition}
Let $C\leq G$ and $p$ prime or zero. Then $\fr{p}_{C, p}$ is an ideal of $\burn_G$. 
\end{proposition}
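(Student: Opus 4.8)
The plan is to pass to the isomorphic Tambara functor $\burn_G^\varphi$ and, as \cref{rmk:P and K equiv} permits, to verify instead that $\sr{K}_{C,p}$ satisfies the conditions of \cref{defn:tamb ideal}. That each $\sr{K}_{C,p}(G/H)$ is a ring-theoretic ideal of $\burn_G^\varphi(G/H)$ is immediate, since $\prod_{I\leq H}\chi_{C,p}(I)$ is an ideal of $\prod_{I\leq H}\ZZ$ and the intersection of an ideal with a subring is an ideal of that subring. Because $\burn_G^\varphi$ is a genuine Tambara functor by \cref{thm:iso_rep}, every structure map automatically takes values in the relevant image ring, so all that remains is to check, coordinate-by-coordinate, that restriction, transfer, norm, and conjugation preserve the defining $\chi_{C,p}$-constraints; for this I would use the explicit formulas recorded in \cref{cor:structure map fmlas}.

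Everything comes down to two elementary facts about the relation $\preccurlyeq_G$: it is invariant under $G$-conjugation (if $I\preccurlyeq_G C$ then $I^g\preccurlyeq_G C$ for all $g\in G$), and it is inherited by subgroups (if $J\leq J'$ and $J'\preccurlyeq_G C$ then $J\preccurlyeq_G C$); equivalently, $\chi_{C,p}(I^g)=\chi_{C,p}(I)$ and $\chi_{C,p}(J)\subseteq\chi_{C,p}(J')$ whenever $J\leq J'$. Granting these, restriction is immediate since $\res{H}{K}(b)=(b_I)_{I\leq H}$ merely forgets coordinates, and conjugation is immediate since $c_{g,H}(a)=(a_{I^{g^{-1}}})_{I\leq H^g}$ merely relabels coordinates along $G$-conjugation. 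For transfer, the $I$-coordinate of $\tr{H}{K}(a)$ is a sum of terms $a_{I^k}$ in which each $I^k$ is a $G$-conjugate of $I$, so if $I\preccurlyeq_G C$ then every term lies in $(p)$ and hence so does the sum. For the norm, the $I$-coordinate of $\N{H}{K}(a)$ is a product $\prod_{[g]\in I\setminus K/H}a_{I^g\cap H}$, and if $I\preccurlyeq_G C$ then each $I^g\cap H$ is a subgroup of the $G$-conjugate $I^g$ of $I$ and hence satisfies $\preccurlyeq_G C$, so each factor lies in $(p)$ and the product does as well.

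In fact the norm computation already yields the sharper containment $\N{H}{K}(\sr{K}_{C,p}(G/H))\subseteq\sr{K}_{C,p}(G/K)$, so the extra summand $\N{H}{K}(0)$ appearing in condition (3) of \cref{defn:tamb ideal} is not needed; in any case the same formula shows $\N{H}{K}(0)=0$, since the product defining each coordinate ranges over the nonempty index set $I\setminus K/H$. I do not anticipate a real obstacle: the only point demanding care is the bookkeeping in the transfer and norm formulas --- identifying exactly which subgroups $I^k$ and $I^g\cap H$ occur and confirming that $\preccurlyeq_G C$ passes to all of them --- which the two observations above handle cleanly. Transporting the conclusion back along the isomorphism $\varphi$ of \cref{rmk:P and K equiv} then shows that $\fr{p}_{C,p}$ is an ideal of $\burn_G$.
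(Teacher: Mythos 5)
Your proof is correct and follows the same strategy as the paper: pass to the ghost-ring model $\sr{K}_{C,p}$ via \cref{rmk:P and K equiv} and verify closure under each structure map coordinatewise using \cref{cor:structure map fmlas}. Your restriction, conjugation, and transfer arguments coincide with the paper's; the one place you diverge is the norm, and your version is actually tighter. The paper singles out a particular $\tilde g$ with $I^{\tilde g}\leq C$ and argues that the single factor $a_{I^{\tilde g}\cap H}$ lies in $(p)$, which silently presumes $\tilde g$ represents one of the double cosets $I\backslash K/H$ appearing in the product; you instead observe that \emph{every} factor $a_{I^g\cap H}$ lies in $(p)$, because $I\preccurlyeq_G C$ implies $I^g\preccurlyeq_G C$ implies $I^g\cap H\preccurlyeq_G C$, and $I^g\cap H\leq H$ so this is a constrained coordinate. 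That sidesteps any concern about which double coset contains $\tilde g$ and, as you note, also gives the sharper containment $\N{H}{K}(\sr{K}_{C,p}(G/H))\subseteq\sr{K}_{C,p}(G/K)$ without invoking the $+\N{K}{H}(0)$ correction in \cref{defn:tamb ideal}(3). The extra remarks you include (that each $\sr{K}_{C,p}(G/H)$ is a ring ideal as the intersection of a product ideal with a subring, and that $\N{H}{K}(0)=0$) are correct and harmless, though the paper leaves them implicit.
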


\begin{proof}
By \cref{rmk:P and K equiv}, this is equivalent to showing that $\sr{K}_{C, p}$ is an ideal of $\burn_G^\varphi$. To prove that $\sr{K}_{C,p}$ is closed under the structure maps, we will use the formulas from \cref{cor:structure map fmlas}. Let $H\leq K \leq G$, $a \in\sr{K}_{C, p}(G/K)$, and $g\in G$. Then for all $I\leq H$ satisfying $I\preccurlyeq_G C$ we have that
$$\left(\res{H}{K}(a)\right)_I = a_I \in (p)$$
implying $\res{H}{K}(a) \in \sr{K}_{C,p}(G/H)$.

Now suppose $I\leq K$ satisfies $I\preccurlyeq_G C$. Then $$\left(\tr{H}{K}(a)\right)_I = \left(\sum_{\substack{[k] \in K/H\\ I^k\leq H}}a_{I^k}\right) \in (p),$$ since $I^k\preccurlyeq_G C$ implies $a_{I^k}\in (p)$ for each $[k]\in K/H$, by the assumption on $a$. Moreover, by definition of $I\preccurlyeq_G C$, there exists $\tilde g\in G$ such that $I^{\tilde g}\leq C$, and so $I^{\tilde g}\cap H\leq C$ as well which means $a_{I^{\tilde g}\cap H}\in (p)$ by assumption on $a$. Thus\[
\left(\N{H}{K}(a)\right)_I = \prod_{[g]\in I\setminus K/H} a_{I^g\cap H} = \left(\prod_{[g]\neq [\tilde g]} a_{I^g\cap H}\right)\cdot a_{I^{\tilde g}\cap H}
\] is in $(p)$ as well.

Finally, for all $I\leq H^g$ satisfying $I\preccurlyeq_G C$, we have that
$$\left(c_{g, H}(a)\right)_I = a_{I^{g^{-1}}} \in (p)$$
so $c_{g, H}(a) \in \sr{K}_{C, p}$. Therefore $\sr{K}_{C, p}$ is an ideal of $\burn_G^\varphi$. 
\end{proof}

\begin{theorem}\label{thm:PCp prime}
Let $G$ be a finite group, $C\leq G$ and $p$ prime or zero. Then $\fr{p}_{C, p}$ is a prime ideal of $\burn_G$.
\end{theorem}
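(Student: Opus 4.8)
The plan is to invoke Nakaoka's primality criterion (\cref{thm:nak_prime}) inside the isomorphic model $\burn_G^\varphi$. By \cref{rmk:P and K equiv} it is enough to prove that $\sr{K}_{C,p}$ is prime in $\burn_G^\varphi$, and we already know it is an ideal; so by \cref{thm:nak_prime} the task is: given $a\in\burn_G^\varphi(G/H)$ and $b\in\burn_G^\varphi(G/H')$ such that $Q(\sr{K}_{C,p},a,b)$ holds, deduce that $a\in\sr{K}_{C,p}(G/H)$ or $b\in\sr{K}_{C,p}(G/H')$. I would prove the contrapositive. Assume $a\notin\sr{K}_{C,p}(G/H)$ and $b\notin\sr{K}_{C,p}(G/H')$; unwinding the definition of $\sr{K}_{C,p}$, this yields subgroups $I_0\leq H$ and $J_0\leq H'$ with $I_0\preccurlyeq_G C$, $J_0\preccurlyeq_G C$, and $a_{I_0}\notin(p)$, $b_{J_0}\notin(p)$. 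The goal is then to produce $L,K,K',g,g'$ satisfying the constraints of \cref{def:Q} for which the product appearing in (*) is \emph{not} in $\sr{K}_{C,p}(G/L)$, thereby showing $Q(\sr{K}_{C,p},a,b)$ fails.

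The choice I would make is to conjugate both subgroups into $C$: pick $\gamma,\gamma'\in G$ with $I_0^\gamma\leq C$ and $J_0^{\gamma'}\leq C$ (possible by definition of $\preccurlyeq_G$), and set $L=C$, $K=I_0$, $g=\gamma$, $K'=J_0$, $g'=\gamma'$. The constraints $K\leq H$, $K'\leq H'$, $L\geq K^g$, $L\geq K'^{g'}$ of \cref{def:Q} are then immediate. Now I would compute the coordinate indexed by $C$ itself of each of the two factors, using the coordinatewise formulas of \cref{cor:structure map fmlas}. The key point is that the double coset space $C\backslash C/K^g$ consists of a single class, so the norm $\N{K^g}{C}$, followed back through $c_{\gamma,I_0}$ and $\res{I_0}{H}$, contributes exactly one term at the $C$-coordinate, namely $a_{(C\cap K^g)^{\gamma^{-1}}}=a_{I_0}$; symmetrically the $C$-coordinate of the second factor is $b_{J_0}$. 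Since multiplication in $\burn_G^\varphi(G/C)\subseteq\prod_{M\leq C}\ZZ$ is coordinatewise, the $C$-coordinate of the product in (*) equals $a_{I_0}\,b_{J_0}$.

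Finally, because $C\preccurlyeq_G C$ we have $\chi_{C,p}(C)=(p)$, so membership of the product in $\sr{K}_{C,p}(G/C)$ would force $a_{I_0}b_{J_0}\in(p)$. But $(p)\subseteq\ZZ$ is a prime ideal when $p$ is a prime, and is also prime for $p=0$ since $\ZZ$ is a domain; as $a_{I_0},b_{J_0}\notin(p)$, the product is not in $(p)$. Hence the element in (*) is not in $\sr{K}_{C,p}(G/C)$, so $Q(\sr{K}_{C,p},a,b)$ is false, which is the desired contradiction. This shows $\sr{K}_{C,p}$ is prime, and therefore, via \cref{rmk:P and K equiv}, so is $\fr{p}_{C,p}$.

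I expect the main obstacle to be the middle step: carefully composing $\N{K^g}{C}\circ c_{\gamma,I_0}\circ\res{I_0}{H}$ through the index-shuffling formulas of \cref{cor:structure map fmlas} and confirming that the relevant double coset is a single class, so that the a priori long norm product degenerates to the single factor $a_{I_0}$. The reductions from the hypothesis on $a,b$, the verification of the quantifier constraints in \cref{def:Q}, and the primality of $(p)$ are all routine.
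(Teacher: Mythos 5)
Your proof is correct, and it takes a genuinely different route from the paper's. The logical form differs only superficially — you argue the contrapositive of Nakaoka's criterion (exhibit one falsifying witness for $Q$), whereas the paper argues directly (assume $Q$, assume $b\notin\sr{K}_{C,p}(G/H')$, and conclude $a_J\in(p)$ for every relevant $J$); those are equivalent. The substantive difference is the choice of $L$. The paper takes $L=G$: then the $C$-coordinate of the norm factor is a product over the double coset space $C\backslash G/J^{j}$, which in general has many classes, and normality of $C$ is exactly what collapses every term $a_{C^{gj^{-1}}\cap J}$ to $a_J$, turning the product into $a_J^{\lvert C\backslash G/J^j\rvert}$. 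You instead take $L=C$, which trivializes the relevant double coset space $C\backslash C/K^\gamma$ to a single class, so the norm degenerates to the single factor $x_{C\cap K^\gamma}=x_{K^\gamma}$, and tracing back through $c_{\gamma,I_0}$ and $\res{I_0}{H}$ gives $a_{I_0}$ exactly as you claim. Consequently the $C$-coordinate of the tested element is $a_{I_0}b_{J_0}$, which is not in the prime ideal $(p)\subseteq\ZZ$, so $Q$ fails. This buys you two things: it bypasses the double-coset bookkeeping entirely, and — more strikingly — your argument nowhere invokes $C\trianglelefteq G$ (and neither does the preceding proposition that $\sr{K}_{C,p}$ is an ideal), so your proof in fact establishes primality of $\fr{p}_{C,p}$ for an arbitrary subgroup $C\leq G$, which is strictly stronger than the stated theorem. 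That strengthening is worth flagging explicitly if you write this up.
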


\begin{proof}
It is equivalent to show that the ideal $\sr{K}_{C, p}$ is prime in $\burn_G^\varphi$. We will use the criterion in \cref{thm:nak_prime}. Let $H, H' \leq G$, $a \in \burn_G^\varphi(G/H)$ and $b \in \burn_G^\varphi(G/H')$. We will show the contrapositive of \cref{thm:nak_prime}, namely that if $a\not\in \sr{K}_{C,p}(G/H)$ and $b\not\in\sr{K}_{C,p}(G/H')$, then $Q(\sr{K}_{C,p},a,b)$ does not hold. To show this, it suffices to find $K\leq H$, $K'\leq H'$, $g,g'\in G$ and $L\leq G$ such that $K^g\leq L$, $K'^{g'}\leq L'$ and\[
(\N{K^g}{L}\circ c_{g, K}\circ \res{K}{H}(a))\cdot(\N{K'^{g'}}{L}\circ c_{g, K'}\circ \res{K'}{H'}(b)) \not\in \sr{K}_{C, p}(G/L).
\] Since $a\not\in \sr{K}_{C,p}(G/H)$ this means there is some $K\leq H$ which is sub-conjugate to $C$ such that $a_K\not\in (p)$.

We can use the formulas from \cref{cor:structure map fmlas} to see that in fact the $C$-coordinate of $$\N{K^g}{L}\circ c_{g, K}\circ \res{K}{H}(a)$$ is not in $(p)$. Specifically, if we choose $g\in G$ such that $K^g\leq C$, then the $K^g$-coordinate of $c_{g,K}(\res{K}{H}(a))$ is $a_{(K^g)^{g^{-1}}}=a_K\not\in (p)$. After we norm up to $C$, the $C$-coordinate is\[
(\N{K^g}{C}\circ c_{g, K}\circ \res{K}{H}(a))_C = \prod_{[\sigma]\in C\setminus C/K^g} (c_{g,K}(\res{K}{H}(a)))_{C^\sigma\cap K^g},
\] which is just $(c_{g,K}(\res{K}{H}(a)))_{K^g}=a_K$ (since $C\setminus C/K^g=*$), and hence is not in $(p)$. An identical argument shows that there is $K'\leq H'$ and $g'\in G$ such that $K'^{g'}\leq C$ and $(\N{K'^{g'}}{C}\circ c_{g', K'}\circ \res{K'}{H'}(b))_C\not\in (p)$. Thus the $C$-coordinate of the product \[
(\N{K^g}{L}\circ c_{g, K}\circ \res{K}{H}(a))\cdot(\N{K'^{g'}}{L}\circ c_{g, K'}\circ \res{K'}{H'}(b))
\] is not in $(p)$.
Since\[
\sr{K}_{C,p}(G/C) = \prod_{I\leq C} (p) \cap \underline{A}^{\varphi}_G(G/C),
\] this means $(\N{K^g}{L}\circ c_{g, K}\circ \res{K}{H}(a))\cdot(\N{K'^{g'}}{L}\circ c_{g, K'}\circ \res{K'}{H'}(b))$ cannot be in $\sr{K}_{C,p}(G/C)$.

\end{proof}

Comparing to Dress's theorem on the inclusion structure of the prime spectrum of the Burnside ring (\cref{thm:dress containment}), we have the following containment rules:

\begin{theorem}\label{thm:containment}
Let $G$ be Abelian, $H,K\leq G$ and $p,q$ prime. Then we have\begin{enumerate}
    \item[(i)] $\fr{p}_{H, 0}\subseteq \fr{p}_{K, 0}$ if and only if $K \leq H$,
    \item[(ii)] $\fr{p}_{H, 0}\subset \fr{p}_{H, p}$ and $\fr{p}_{H, p}\not\subseteq \fr{p}_{K, 0}$,
    \item[(iii)] $\fr{p}_{H, p}\subseteq \fr{p}_{K, q}$ if and only if $p = q$ and $O^p(K)\leq O^p(H)$,
\end{enumerate} where $O^p(H)$ is as in \cref{defn:OpH}.
\end{theorem}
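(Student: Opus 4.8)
The plan is to reduce each containment of Tambara ideals to a statement about ordinary Burnside rings and then quote Dress's containment theorem (\cref{thm:dress containment}). Since $G$ is Abelian, conjugation is trivial, so $I\preccurlyeq_G C$ just means $I\leq C$, and \cref{def:the_ideals} becomes $\fr{p}_{C,p}(G/L)=\bigcap_{I\leq L\cap C}\ker(\varphi_{L,p}^{I})$ for every $L\leq G$, a finite intersection of prime ideals of $A(L)$ (using \cref{rmk:isom AG and AH} to read $\burn_G(G/L)$ as $A(L)$). As containment of Tambara ideals is checked levelwise, $\fr{p}_{H,p}\subseteq\fr{p}_{K,q}$ holds exactly when $\bigcap_{I\leq L\cap H}\ker(\varphi_{L,p}^{I})\subseteq\bigcap_{J\leq L\cap K}\ker(\varphi_{L,q}^{J})$ for all $L\leq G$; the instance $L=G$ returns the original ideals, so each ``only if'' below will follow from its $L=G$ case alone.

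Next comes a prime-avoidance step: each $\ker(\varphi_{L,p}^{I})$ is prime (\cref{thm:spec burn ring}), and a finite intersection of primes contained in a prime $Q$ has some member contained in $Q$ (the product lies inside the intersection). So the level-$L$ inclusion holds iff for every $J\leq L\cap K$ there is some $I\leq L\cap H$ with $\ker(\varphi_{L,p}^{I})\subseteq\ker(\varphi_{L,q}^{J})$. Feeding each individual containment through \cref{thm:dress containment} (for $A(L)$) rewrites this as: $I=J$ in case (i); automatically true for $I=J$ in case (ii), by part (iv); and ``$p=q$ and $O^{p}(I)=O^{p}(J)$'' in case (iii), by parts (ii) and (iii). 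Case (i) then reads ``for all $L$, every $J\leq L\cap K$ satisfies $J\leq L\cap H$'', which holds iff $K\leq H$ (take $L=G$ one way, trivial the other); so (i) is done. For (ii), the $I=J$ choice already gives $\fr{p}_{H,0}\subseteq\fr{p}_{H,p}$; it is strict since the element $p\in A(G)$ lies in $\fr{p}_{H,p}(G/G)$ but not in $\fr{p}_{H,0}(G/G)$; and $\fr{p}_{H,p}\not\subseteq\fr{p}_{K,0}$, for otherwise $\ker(\varphi_{G,p}^{I})\subseteq\ker(\varphi_{G,0}^{e})$ for some $I\leq H$, contradicting \cref{thm:dress containment}(iv). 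So (ii) is done, and case (iii) has reduced to: $p=q$ and, for all $L\leq G$ and all $J\leq L\cap K$, there exists $I\leq L\cap H$ with $O^{p}(I)=O^{p}(J)$.

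What remains --- the heart of the matter --- is to see that this last condition is equivalent to $O^{p}(K)\leq O^{p}(H)$. I will use two facts about $O^{p}$ (\cref{defn:OpH}): \textbf{(a)} every $p$-perfect subgroup $P$ of a group $M$ lies in $O^{p}(M)$, since whenever $M/N$ is a $p$-group so is $PN/N\cong P/(P\cap N)$, forcing $P=O^{p}(P)\leq P\cap N$, hence $P\leq N$; and \textbf{(b)} consequently $J\leq M$ implies $O^{p}(J)\leq O^{p}(M)$, by applying (a) to the $p$-perfect subgroup $O^{p}(J)$ of $M$. Granting these: if $O^{p}(K)\leq O^{p}(H)$ and $J\leq L\cap K$, then $O^{p}(J)\leq O^{p}(K)\leq O^{p}(H)$ and $O^{p}(J)\leq L$, so $I:=O^{p}(J)$ lies in $L\cap H$ and $O^{p}(I)=O^{p}(J)$; conversely, taking $L=G$ and $J=K$ produces $I\leq H$ with $O^{p}(I)=O^{p}(K)$, and then $O^{p}(K)=O^{p}(I)\leq I\leq H$ is $p$-perfect, so $O^{p}(K)\leq O^{p}(H)$ by (a). This settles (iii). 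The one genuinely delicate point is exactly this equivalence: one must check that the ``there exists $I$'' furnished by prime avoidance can be met uniformly over all levels $L$ precisely when $O^{p}(K)\leq O^{p}(H)$, and facts (a) and (b) are designed to make that work.
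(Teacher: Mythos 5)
Your proof is correct, and it takes a genuinely different route from the paper's. The paper proves the ``only if'' directions of (i) and (iii) by exhibiting an explicit witness element: for (i) it uses $X = \abs{G}\,G/G - \abs{H}\,G/H$, whose marks are computed via \cref{lem:phi_vals} to be $0$ on subgroups of $H$ and $\abs{G}$ otherwise, and for (iii) it runs the same construction at the level $G/O^p(G)$; for the ``if'' direction of (iii) it proves the reindexing identity (\cref{eqn:pHp}) showing $\fr{p}_{H,p}(G/L)$ equals an intersection over subgroups of $O^p(L)\cap O^p(H)$. You instead make a single structural observation --- each level of $\fr{p}_{H,p}$ is a finite intersection of primes of $A(L)$, and a prime containing such an intersection must contain one of the factors --- which converts the Tambara-ideal containment into a collection of pairwise containments $\ker(\varphi_{L,p}^I)\subseteq\ker(\varphi_{L,q}^J)$, all handled uniformly by \cref{thm:dress containment}. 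The remaining work is the group-theoretic equivalence at the end of (iii), which you resolve via the facts (a) that any $p$-perfect subgroup of $M$ lies in $O^p(M)$ and (b) that $O^p$ is monotone; these replace the paper's explicit identity \cref{eqn:pHp}. Your argument is more systematic in that all three parts fall out of the same prime-avoidance reduction, whereas the paper treats each part with a tailored computation; the paper's version, in exchange, produces concrete separating elements. One small stylistic note: the phrase ``so each `only if' below will follow from its $L=G$ case alone'' is slightly misleading for (i) and (iii), since the ``if'' direction still needs the claim at every $L$ --- but you do in fact supply that in both cases (trivially for (i), and via $I:=O^p(J)$ for (iii)), so there is no gap.
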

\begin{proof}
(i) The ``if'' direction follows directly from \cref{thm:dress containment} and the definition of $\fr{p}_{H, p}$ as the intersection of the kernels of the mark maps. To see the ``only if'' direction, suppose that $\fr{p}_{H,0}\subseteq \fr{p}_{K,0}$. This implies, in particular, that\[
\fr{p}_{H,0}(G/G) =\bigcap_{I\leq H} \ker\varphi_G^I \subseteq   \bigcap_{I\leq K} \ker\varphi_G^I = \fr{p}_{K,0}(G/G).
\] Consider the $G$-set $X=\abs{G}\cdot G/G-\abs{H}\cdot G/H$. By \cref{lem:phi_vals}, the value of $X$ under $\varphi^I_G$ is\begin{equation}\label{eqn:value of X}
    \varphi_G^I(X) = \left\{\begin{array}{cc}
     0 &  I\leq H;\\
     \abs{G} & \text{otherwise}.
\end{array}\right.
\end{equation}
Thus $X\in \fr{p}_{H,0}(G/G)\subseteq \fr{p}_{K,0}(G/G)$, which means that $\varphi_G^I(X)=0$ for all $I\leq K$. Taking $I=K$ in \cref{eqn:value of X} shows $K\leq H$ as desired.

The first part of (ii) follows directly from the definition of $\fr{p}_{H, p}$. The second follows from the fact that for all $H, K\leq G$, $pG/G\in\fr{p}_{H, p}(G/G)$ and $pG/G\not\in\fr{p}_{K, 0}(G/G)$ since $\varphi^I_G(p G/G) = p$ for all $I \leq G$.  

 To show (iii), first suppose that $p=q$ and $O^p(K)\leq O^p(H)$. We claim that
\begin{equation}\label{eqn:pHp}
    \fr{p}_{H, p}(G/L) = \bigcap_{I \leq L\cap H}\ker\varphi_{L, p}^I = \bigcap_{I \leq L\cap H}\ker\varphi_{L, p}^{O^p(I)} = \bigcap_{J \leq O^p(L)\cap  O^p(H)}\ker\varphi_{L, p}^J,
\end{equation} for all $L\leq G$. The first equality is by definition, and the second follows from \cref{rmk:ker H and OpH}. For the final equality, we note that\[
\{O^p(I) \mid I\leq L\cap H\} = \{J\mid J\leq O^p(L)\cap O^p(H)\},
\] because both sets consist of precisely the subgroups of $L\cap H$ which are relatively prime to $p$ (see \cref{rmk:OpH when G Ab}).
The inclusion $\fr{p}_{H, p} \subseteq \fr{p}_{K, p}$ follows from \cref{eqn:pHp}. 

For the other implication of (iii), note that if $\fr{p}_{H,p}\subseteq \fr{p}_{K,q}$ then $p$ must equal $q$. To show that $O^p(K)\leq O^p(H)$, we use essentially the same strategy as in part (i). By \cref{eqn:pHp} and our assumption that $\fr{p}_{H,p}\subseteq \fr{p}_{K,q}$, we know
\[
\fr{p}_{H,p}(G/O^p(G)) =\bigcap_{I\leq O^p(H)} \ker\varphi_{O^p(G),p}^I \subseteq   \bigcap_{I\leq O^p(K)} \ker\varphi_{O^p(G),p}^I = \fr{p}_{K,p}(G/O^p(G)).
\]
Taking $X=\abs{O^p(G)}O^p(G)/O^p(G) - \abs{O^p(H)}O^p(G)/O^p(H)$ we can essentially repeat the argument from part (i). In particular, we have
\begin{equation}\label{eqn:value of X (2)}
    \varphi_{O^p(G),p}^{I}(X) = \left\{\begin{array}{cc}
     0 &  I\leq O^p(H);\\
     \abs{O^p(G)} & \text{otherwise}.
\end{array}\right.
\end{equation}
By construction, $X \in \fr{p}_{H,p}(G/O^p(G))\subseteq \fr{p}_{K,p}(G/O^p(G))$ which implies that $O^p(K) \leq O^p(H)$ by \cref{eqn:value of X (2)}.
\end{proof}

Having found this family of prime ideals of $\burn_G$ and determined their containment structure, we can compare our findings to the structure of $\Spec(A(G))$. Remarkably, while the two sets are in bijection, they are not homeomorphic in general.

\begin{proposition}\label{prop:no homeom}
For an Abelian group $G$, there is a bijection between $\Spec(A(G))$ and 
$$\{\fr{p}_{C,p}\mid C\leq G, ~p\text{ prime or }0\}\subseteq\Spec(\burn_G).$$ However, the two spaces are not homeomorphic (for nontrivial $G$). 
\end{proposition}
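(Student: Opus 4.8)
The plan is to treat the bijection and the non-homeomorphism separately; the first is essentially bookkeeping, and the second is where the real content lies.

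\textbf{The bijection.} Both $\Spec(A(G))$ and $S := \{\fr{p}_{C,p}\mid C\le G,\ p\text{ prime or }0\}$ are parametrized by pairs $(H,p)$ with $H\le G$ and $p$ prime or zero: the former by \cref{thm:spec burn ring}, the latter by \cref{def:the_ideals} (and every $\fr{p}_{C,p}$ is prime by \cref{thm:PCp prime}, since for $G$ Abelian every subgroup is normal). So I would define $\Phi\colon\Spec(A(G))\to S$ by $\ker\varphi_{G,p}^H\mapsto\fr{p}_{H,p}$ and check it is a well-defined bijection. The key observation is that the two parametrizations carry \emph{the same} redundancies: for $G$ Abelian, \cref{thm:dress containment} says $\ker\varphi_{G,0}^H=\ker\varphi_{G,0}^K$ iff $H=K$, that $\ker\varphi_{G,p}^H=\ker\varphi_{G,p}^K$ iff $O^p(H)=O^p(K)$, and that no $p$-primary kernel equals a $0$-primary one; while \cref{thm:containment} gives exactly these three statements for $\fr{p}_{H,0}$ versus $\fr{p}_{K,0}$ (apply (i) both ways), for $\fr{p}_{H,p}$ versus $\fr{p}_{K,p}$ (apply (iii) both ways), and for $\fr{p}_{H,p}$ versus $\fr{p}_{K,0}$ (by (ii)). Combining these shows $\Phi$ is well-defined and injective, and it is surjective by construction. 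This step is routine.

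\textbf{Failure of homeomorphism.} Here I would argue via Krull dimension, equivalently via the specialization order, which is a homeomorphism invariant: for $\Spec$ of a ring or Tambara functor, and for any subspace thereof, the closure of a point $\sr{P}$ is $V(\sr{P})$ by \cref{defn:spec}, so the specialization order is inclusion of prime ideals and its longest chain is the Krull dimension. I claim $\dim\Spec(A(G))=1$ for every $G$, whereas $\dim S\ge 2$ whenever $G\neq e$, so the two spaces cannot be homeomorphic. For the first claim: by \cref{thm:dress containment}, any strict inclusion of primes of $A(G)$ must have the form $\ker\varphi_{G,0}^H\subsetneq\ker\varphi_{G,p}^K$ with $p$ prime --- parts (i), (ii), (iv) exclude, respectively, a strict inclusion of one $0$-primary prime into another, of a $p$-primary into a $q$-primary, and of a $p$-primary into a $0$-primary --- so no chain of two strict inclusions exists; and $\ker\varphi_{G,0}^G\subsetneq\ker\varphi_{G,p}^G$ (by (iv)) shows the dimension is exactly $1$. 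For the second claim: if $G\neq e$, choose any prime $p$; then \cref{thm:containment}(i) gives $\fr{p}_{G,0}\subsetneq\fr{p}_{e,0}$ (the inclusion holds since $e\le G$, and is strict since $G\neq e$), and \cref{thm:containment}(ii) gives $\fr{p}_{e,0}\subsetneq\fr{p}_{e,p}$, producing a chain of length $2$ in $S$. (When $G=e$ both spaces are $\Spec(\ZZ)$, consistent with the hypothesis that $G$ be nontrivial.)

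\textbf{Main obstacle.} The one substantive point is recognizing that $\burn_G$ and $A(G)$ genuinely disagree topologically: $\fr{p}_{G,0}$ is a generic point dominating all of $S$, reflecting the entire subgroup lattice of $G$, whereas in $\Spec(A(G))$ the $0$-primary primes $\ker\varphi_{G,0}^H$ are pairwise incomparable minimal primes. Once \cref{thm:containment} is in hand this disagreement is explicit; the only care required is to phrase the conclusion as a topological invariant (Krull dimension, or the specialization poset) so that we exclude \emph{all} homeomorphisms rather than merely observing that the particular bijection $\Phi$ is discontinuous.
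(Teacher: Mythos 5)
Your proposal is correct and follows essentially the same strategy as the paper: identify the bijection via the shared $(H,p)$-indexing, then distinguish the spaces by Krull dimension (equivalently, the specialization poset). Two minor differences: you are more careful than the paper about \emph{well-definedness} of the bijection, explicitly matching up the redundancies in the two parametrizations via \cref{thm:dress containment} and \cref{thm:containment}, which the paper leaves implicit; and whereas the paper exhibits a maximal chain $\fr{p}_{G,0}\subseteq\fr{p}_{G,q}\subseteq\fr{p}_{H_1,q}\subseteq\cdots\subseteq\fr{p}_{e,q}$ (with $q\nmid|G|$) to compute the dimension of $S$ exactly as $m+1$, you use the shorter chain $\fr{p}_{G,0}\subsetneq\fr{p}_{e,0}\subsetneq\fr{p}_{e,p}$, which only shows $\dim S\ge 2$ but is all that is needed once $\dim\Spec(A(G))=1$ is established.
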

\begin{proof}
To see that the spectra are in bijection, recall that $\Spec(A(G))$ is comprised of kernels of maps $\varphi^C_{G,p}$ for $C\leq G$ and $p$ prime or $0$ (see \cref{def:phi} and \cref{thm:spec burn ring}). When $G$ is Abelian, the above shows that there is a bijection between $\{\fr{p}_{C,p}\mid C\leq G, ~p\text{ prime or }0\}\subseteq \Spec(\burn_{G})$ and $\Spec(A(G))$ given by \[
\fr{p}_{C, p} \:\longleftrightarrow \ker(\varphi^C_{G, p}).\]

However, this bijection is not a homeomorphism, and in fact there is no homeomorphism between the two spectra (for $G\neq e$). One way to see this is to consider the Krull dimension of each spectrum. From \cref{thm:containment}, we see that the Krull dimension of $\{\fr{p}_{C,p}\mid C\leq G, ~p\text{ prime or }0\}$ is equal to $m+1$ where $m$ is the maximal length of a subgroup series in $G$. This can be seen by considering the chain of prime ideals 
$$\fr{p}_{G, 0} \subseteq \fr{p}_{G, q} \subseteq \fr{p}_{H_1, q} \cdots  \subseteq \fr{p}_{{H_{m-1}}, q} \subseteq \fr{p}_{H_m, q}$$
where $H_m=e$, each $|H_i:H_{i+1}|$ is prime, and $q \nmid |G|$. However, by theorem \cref{thm:dress containment}, the Krull dimension of $\Spec(A(G))$ is easily seen to be 1 since the only non-trivial inclusion is $\ker(\varphi^H_{G, 0}) \subset \ker(\varphi^H_{G, p})$. 
\end{proof}
\section{Describing The Ideals In Terms of Generators}\label{sec:gens}

We suspect that such $\fr{p}_{C,p}$ comprise the entire prime spectrum of $\burn_G$. Using different methods, Nakaoka has shown that this is indeed the case for any cyclic $p$-group $C_{p^n}$ \cite[Theorem 6.12]{nakaoka:2014}. A direct reformulation of Nakaoka's theorem statement in terms of our ideals is made in \cref{thm:Nak_main}. In this section we make progress towards expanding Nakaoka's result by describing our ideals in terms of certain generators.

\subsection{Ring-Theoretic Generators}
In this section we will describe the ideals $\fr{p}_{C, p}$ of $\burn_G$ in the case that $G$ is a cyclic group. In order to describe an arbitrary $X\in \fr{p}_{C,p}(G/G)$, it will be helpful to partition $X$ into smaller pieces, denoted $X_J$ for $J\leq G$. The following proposition describes certain functions $\psi^J$ which will help us partition $X$ in a useful way.

\begin{proposition}
Let $G$ be an Abelian group, $C\leq G$, and $p$ be prime or zero. Take $X = \sum_{I\leq G}m_IG/I\in \fr{p}_{C,p}(G/G)$. For each $J \leq C$, define $S_J = \{I\leq G \mid I\cap C = J\}$ and
$$\psi^J(X) = \sum_{I\in S_J}m_I|G:I|.$$
Then $$\psi^J(X) \equiv_p 0.$$
\end{proposition}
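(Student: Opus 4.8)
The plan is to translate the membership $X \in \fr{p}_{C,p}(G/G)$ into a system of congruences on the coefficients $m_I$, and then recover the congruences $\psi^J(X) \equiv_p 0$ by Möbius inversion over the subgroup lattice of $C$, which here takes the concrete form of a short downward induction.

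First I would unwind the hypothesis. Since $G$ is Abelian, $I \preccurlyeq_G C$ simply means $I \leq C$, so by \cref{def:the_ideals} the assumption $X \in \fr{p}_{C,p}(G/G)$ says precisely that $\varphi_{G,p}^I(X) = 0$, i.e. $\varphi_G^I(X) \equiv_p 0$, for every $I \leq C$. Combining \cref{lem:phi_vals} with $\varphi_G^I(X) = \sum_{K \leq G} m_K \varphi_G^I(G/K)$ gives
$$\varphi_G^I(X) = \sum_{\substack{K \leq G \\ K \geq I}} m_K \, \abs{G:K},$$
so, writing $f(I)$ for the right-hand side, the hypothesis becomes: $f(I) \equiv_p 0$ for all $I \leq C$.

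Next I would express each $f(J)$ with $J \leq C$ in terms of the quantities $\psi^{J'}(X)$. For any $K \leq G$ with $K \geq J$, the intersection $K \cap C$ is a subgroup of $C$ lying in the interval $[J,C]$ of the subgroup lattice of $C$; conversely, any $K$ with $K \cap C = J'$ for some $J' \in [J,C]$ satisfies $K \supseteq K \cap C = J' \supseteq J$. Hence $\{K \leq G : K \geq J\} = \bigsqcup_{J \leq J' \leq C} S_{J'}$, which yields
$$f(J) = \sum_{J \leq J' \leq C} \psi^{J'}(X).$$

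Finally I would induct on subgroups $J \leq C$, treating subgroups of larger order first (equivalently, inducting on $\abs{C:J}$). For the base case $J = C$ the displayed identity reads $f(C) = \psi^C(X)$, so $\psi^C(X) = f(C) \equiv_p 0$. For the inductive step, $\psi^J(X) = f(J) - \sum_{J < J' \leq C} \psi^{J'}(X)$, where $f(J) \equiv_p 0$ by hypothesis and each term $\psi^{J'}(X)$ with $J' > J$ vanishes mod $p$ by the inductive hypothesis; hence $\psi^J(X) \equiv_p 0$. (When $p = 0$ the congruences are equalities in $\ZZ$ and the same argument applies verbatim.) The argument is formal once the combinatorial decomposition is in place; the step I expect to require the most care — the main obstacle — is pinning down the identity $f(J) = \sum_{J \leq J' \leq C} \psi^{J'}(X)$, i.e. checking that sorting the terms $m_K\abs{G:K}$ with $K \geq J$ according to the subgroup $K \cap C \in [J,C]$ genuinely partitions the sum with no term counted twice or omitted.
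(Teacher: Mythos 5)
Your proposal is correct and is essentially the paper's argument, merely packaged slightly differently: where you prove the partition identity $\varphi_G^J(X) = \sum_{J \leq J' \leq C}\psi^{J'}(X)$ first and then invert by downward induction, the paper defines $\zeta^J$ by the inversion recurrence $\zeta^J = \varphi_G^J - \sum_{J < I \leq C}\zeta^I$ up front and then shows $\zeta^J = \psi^J$ by the same induction. The key combinatorial fact — that $\{K \leq G : K \geq J\}$ is partitioned by $K \cap C$ over the interval $[J, C]$ — is identical in both, and you identified it correctly as the crux.
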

\begin{proof}
Let $$X = \sum_{I\leq G}m_IG/I \in \burn(G/G)$$ and suppose that $X \in \fr{p}_{C, p}(G/G)$. Then, by applying \cref{lem:phi_vals} to the condition that $\varphi_G^J(X) \in (p)$, we have that
\begin{equation}
\label{eq:phi_p}
\varphi_{G}^J(X) = \sum_{J\leq I \leq G} |G:I|m_I \in (p),
\end{equation} 
for all $J \leq C$.
For each such $J$, define $\zeta^J(X)$ by the recurrence
\begin{equation*}
\zeta^J(X) = \varphi_G^J(X) - \sum_{J < I\leq C}\zeta^I(X).
\end{equation*}
Then by \cref{eq:phi_p}, we have
$$\zeta^J(X) \equiv_p 0$$
for all $J \leq C$. We will show by induction that
\begin{equation*}
\zeta^J(X) = \psi^J(X).
\end{equation*}
The base case is trivial since $\zeta^C(X) = \varphi^C(X) = \psi^C(X) $. Let $J \leq C$, and suppose that the statement holds for all $I \leq C$ such that $J<I$. Then
\begin{align*}
\zeta^J(X) &= \varphi^J_G(X) - \sum_{J < K \leq C} \zeta^K(X) \\
          &= \sum_{I \geq J} m_I|G:I| - \sum_{J < K \leq C} \psi^K(X) \\
          &= \sum_{I \geq J} m_I|G:I| - \sum_{J < K \leq C} \sum_{H \cap C = K} m_H |G:H|\\
          &= \sum_{I \cap C = J} m_I |G:I|\\
          &= \sum_{I \in S_J} m_I |G:I|\\
          &= \psi^J(X)
\end{align*}
where the last line follows since for any subgroup $J < I \leq G$ we have 
$J \leq C \cap I$. Since $\psi^J(X)=\zeta^J(X)$, it follows immediately that $\psi^J(X) \equiv_p 0$. 
\end{proof}

\begin{remark}\label{rmk:SJ partition}
The collection of $S_J$ as $J$ varies clearly partition the subgroups of $G$. Therefore each $X \in \fr{p}_{C,p}(G/G)$ decomposes as 
$$X = \sum_{J\leq C}X_J$$
where 
$$X_J = \sum_{I\in S_J}m_IG/I$$
and each $X_J$ satisfies 
$$\psi^J(X_J) = \sum_{I\in S_J}m_I|G:I| \equiv_p 0.$$
\end{remark}

\begin{remark}\label{rmk:CS gps}
One property of cyclic groups that is essential to the next theorem is the existence of a unique maximal subgroup $M_J$ for each $S_J$ with respect to inclusion. This observation, while not difficult to verify directly, is related to the fact that cyclic groups belong to the larger family known as Closed Summand (CS) groups (cf. \cite{Tercan}). One of the properties of such groups is that for any $H\leq G$, there is a unique subgroup $M\leq G$ maximal amongst the set of subgroups $I$ such that $H \cap I = e$. Since every quotient of a cyclic group is another cyclic group, and therefore a CS group, cyclic groups satisfy the stronger condition mentioned above.
\end{remark}

\begin{theorem}
\label{thm:first_gen}
Suppose $G$ is a cyclic group and let $p$ be $0$ or prime. Then the ideal $\fr{p}_{C, p}(G/G)$ is generated by the elements 
\begin{enumerate}
    \item $pG/G$,
    \item $G/H$ for all $H\leq G$ such that $p \mid |G:H|$,
    \item $G/K - |M_J:K|G/M_J$ for each $J \leq C$ and each $K\in S_J$.
\end{enumerate}
Note that in the case $p = 0$, this reduces to the generators
\begin{enumerate}
    \item $G/K - |M_J:K|G/M_J$ for each $J \leq C$ and each $K\in S_J$
\end{enumerate}
\end{theorem}

\begin{proof}
Recall that $\fr{p}_{C, p}(G/G)$ is an ideal of $\underline{A}_G(G/G) = A(G)$, since $\fr{p}_{C, p}$ is a Tambara ideal. A quick check verifies that all these elements are indeed members of $\fr{p}_{C, p}(G/G)$. Now let $X \in \fr{p}_{C, p}(G/G)$. By \cref{rmk:SJ partition}, it suffices to show that $X_J$ can be written in terms of the above generators for any given $J\leq C$. So we now consider 
$$X_J = \sum_{I\in S_J}m_IG/I,$$
knowing that
$$\sum_{I\in S_J}m_I|G:I| \equiv_p 0.$$
By \cref{rmk:CS gps}, each $I\in S_J$ is necessarily contained in some unique $M_J$ (where $M_J$ is maximal in $S_J$) and therefore $\abs{G:I}$ is divisible by $|G:M_J|$. If $p\mid \abs{G\colon M_J}$, then $p\mid \abs{G\colon I}$ as well, and so $X_J$ may be written in terms of generators of the form (2). Otherwise, we may safely divide by $\abs{G\colon M_J}$ and the equation above becomes
$$m_{M_J} \equiv_p -\sum_{I\in S_J\setminus\{M_J\}} m_I\abs{M_J\colon I}$$
and therefore
$$X_J = \sum_{I\in S_J\setminus \{M_J\}}m_I(G/I - |M_J:I|G/M_J) + npG/M_J$$
for some $n\in\ZZ$. 
\end{proof}

\begin{remark}\label{rmk:pep gens}
When $C=e$ or $C=G$, we can extend this result to any finite Abelian group $G$. In the case $C=e$, it suffices to consider $S_e$, which is the set of all subgroups of $G$, and therefore $M_e=G$. Then the argument above shows that the ideal $\fr{p}_{e, p}$ is generated by
\begin{enumerate}
    \item $pG/G$,
    \item $G/H$ for all $H\leq G$ such that $p \mid |G:H|$,
    \item $G/K - |G:K|G/G$ for each $K\in G$.
\end{enumerate}
In the case $C=G$, note that $S_J = { J }$ for each $J \leq G$, and therefore $M_J=J$. Then the argument above shows that the ideal $\fr{p}_{G, p}$ is generated by
\begin{enumerate}
    \item $pG/G$,
    \item $G/H$ for all $H\leq G$ such that $p \mid |G:H|$
\end{enumerate}

\end{remark}

\subsection{Tambara-Theoretic Generators}
We now wish to develop a list of Tambara functor theoretic generators for our prime ideals. These generators will be helpful when we build up prime ideals of $\burn_G$ from prime ideals of $\burn_H$ for $H\leq G$ (using the isomorphism from \cref{rmk:isom AG and AH}). 

\begin{theorem} \label{thm:simple_gens}
Let $G$ be a finite Abelian group and $\sr{P}$ be a prime ideal of $\burn_G$. If $pe/e\in \sr{P}(G/e)$ then $pH/H \in \sr{P}(G/H)$ for all $H\leq G$. 
\end{theorem}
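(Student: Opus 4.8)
The plan is to induct on the order of $H$ using the structure of cyclic-like subgroup chains, but since $G$ need not be cyclic here, I would instead induct on the length of a subgroup chain from $e$ to $H$. Concretely, fix $H \leq G$ and choose a chain $e = K_0 \trianglelefteq K_1 \trianglelefteq \cdots \trianglelefteq K_r = H$ in which each index $|K_{i+1}:K_i|$ is prime; such a chain exists once we pass to a composition-style refinement (or, if $G$ is cyclic as in the application, simply take the unique such chain). The goal is to propagate the hypothesis $pe/e \in \sr P(G/e)$ up this chain one prime index at a time, so it suffices to prove the single inductive step: if $pK/K \in \sr P(G/K)$ and $K \trianglelefteq L$ with $|L:K| = \ell$ prime, then $pL/L \in \sr P(G/L)$.

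For the inductive step I would split into two cases according to whether $\ell = p$ or $\ell \neq p$. When $\ell \neq p$, this is exactly \cref{lem:simple_gen_lem} (applied with $\sr I = \sr P$, $H = L$, $q = \ell$), which gives $pL/L \in \sr P(G/L)$ directly via the combination $\N{K}{L}(pK/K) - \frac{p^{\ell-1}-1}{\ell}\tr{K}{L}(pK/K)$. The case $\ell = p$ is where I expect the real work, and where primality of $\sr P$ (rather than just the ideal axioms) must enter — note the statement is about a \emph{prime} ideal, and \cref{lem:simple_gen_lem} deliberately avoided the equal-prime case. Here I would use Nakaoka's criterion \cref{thm:nak_prime}: I want to show $pL/L \in \sr P(G/L)$, so I would find elements $a, b$ with $Q(\sr P, a, b)$ holding, with $ab$ related to $pL/L$, and with one of $a,b$ provably not in $\sr P$ so that the other — yielding $pL/L$ — must be. A natural choice is to take both $a$ and $b$ supported at level $L$ (or at $K$), built from $L/K$ and $pL/L$ or $pL/K$, exploiting that $\N{K}{L}$ and $\tr{K}{L}$ interact with $L/K$ in a controlled way via \cref{thm:nak_norm}; the transfer $\tr{K}{L}(pK/K) = pL/K$ and $\N{K}{L}(pK/K) = pL/L + \frac{p^p - p}{p} L/K$ are the basic identities, and $\frac{p^p-p}{p} = p^{p-1}-1$ is coprime to $p$.

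The key obstruction I anticipate: in the $\ell = p$ case, $pL/L$ is \emph{not} expressible as an ideal-combination of $pK/K$ alone (that is precisely why Nakaoka's result is nontrivial and why \cref{lem:simple_gen_lem} has the hypothesis $q \neq p$). So I cannot argue purely at the level of ideal membership; I genuinely need the prime condition. The delicate part will be choosing $a$ and $b$ so that the proposition $Q(\sr P, a, b)$ — which quantifies over all $L' \geq K^g, K'^{g'}$ and all restrictions/conjugations/norms — actually holds, using the inductive hypothesis $pK/K \in \sr P(G/K)$ together with the fact that $\sr P$ is already an ideal (so it absorbs norms, transfers, and restrictions up to the $\N{}{}(0)$ correction in axiom (3) of \cref{defn:tamb ideal}), while simultaneously ensuring the "other" factor lies outside $\sr P$. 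I would likely take $a = b$ to be something like $L/K - \lambda\, L/L$ for a suitable integer $\lambda$ chosen so that $a \notin \sr P(G/L)$ (e.g. because its mark at $e$ or at $L$ is a unit mod $p$) yet $a^2$, after pushing forward, lands in $\sr P$ modulo the image of the already-controlled $pK/K$; verifying $Q$ then reduces to a finite check using \cref{cor:structure map fmlas} and the Mackey/norm formulas, with the equal-prime divisibility $p \mid p^p - p$ doing the essential arithmetic. If this direct choice proves too rigid, the fallback is to invoke Nakaoka's own theorem \cite[Theorem 6.12]{nakaoka:2014} (reformulated as \cref{thm:Nak_main}) for the cyclic $p$-group generated along the chain, transporting primality through the isomorphism of \cref{rmk:isom AG and AH}.
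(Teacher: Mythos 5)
Your overall architecture is right --- induct up a chain of prime indices, dispatch the $\ell \neq p$ step with \cref{lem:simple_gen_lem}, and recognize that the equal-prime step requires genuine use of primality via \cref{thm:nak_prime}. But the execution of the $\ell = p$ step has two concrete problems.

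First, you misapply Nakaoka's criterion. You plan to choose $a, b$ with $Q(\sr{P}, a, b)$ holding and with one of them \emph{provably not} in $\sr{P}$, so the other must be. But the criterion says: $Q(\sr{P},a,b)$ holding forces $a \in \sr{P}$ \emph{or} $b \in \sr{P}$. If you choose $a = b = L/K - \lambda L/L$ and arrange $a \notin \sr{P}(G/L)$, then $Q(\sr{P}, a, a)$ simply cannot hold, and you conclude nothing. The correct move is the opposite: set $a = b = pH/H$, so that ``$a \in \sr{P}$ or $b \in \sr{P}$'' collapses to the single desired conclusion $pH/H \in \sr{P}(G/H)$. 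There is no element you need to keep \emph{out} of $\sr{P}$.

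Second, and this is the missing idea: to verify $Q(\sr{P}, pH/H, pH/H)$ you must handle the top case $J = J' = H$, where the product is $\N{H}{L}(pH/H)^2 = \N{H}{L}(p^2 H/H)$. This requires the intermediate fact $p^2 H/H \in \sr{P}(G/H)$, which you never establish. It follows directly from the two identities you already wrote down: taking $p\cdot\N{K}{H}(pK/K) - (p^{p-1}-1)\cdot\tr{K}{H}(pK/K) = p^2 H/H$, and this lies in $\sr{P}(G/H)$ because $pK/K \in \sr{P}(G/K)$ and $\sr{P}$ is closed under transfer and norm. Once you have $p^2H/H \in \sr{P}(G/H)$, the $J = J' = H$ case of $Q$ follows, and every other case (WLOG $J < H$) follows from the inductive hypothesis $pJ/J \in \sr{P}(G/J)$ plus the ideal axioms. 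Your ``fallback'' of invoking \cref{thm:Nak_main} on a cyclic $p$-group would also not close the argument: the $\ell = p$ step can occur at an $H$ that is not a $p$-group (your chain mixes primes), so Nakaoka's theorem does not directly apply to $\sr{P}|_H$ there.
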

\begin{proof}
We perform induction on the order of $H$. Consider a subgroup $e<H \leq G$ and assume the result for all subgroups of smaller order. Note that \[
\N{e}{H}(pe/e) = {\rm Map}(H, pe/e)
\] is defined to be the set of non-equivariant maps $H \rightarrow pe/e$ with $H$-action given by conjugation. Since $pe/e$ has trivial $H$-action, this is the same as pre-composition by multiplication in $H$. As an $H$-set, the norm decomposes into
\begin{equation*}
    \N{e}{H}(pe/e) = pH/H + \sum_{I < H}a_I H/I
\end{equation*}
where the coefficients $a_I=\frac{C(I)}{\abs{G:I}}$ are determined inductively via \cref{thm:nak_norm}. Therefore,
\begin{align*}
    p^2H/H &= p\N{e}{H}(pe/e) - \sum_{I < H}a_I pH/I \\
           &= p\N{e}{H}(pe/e) - \sum_{I < H}a_I \tr{I}{H}(pI/I)
\end{align*}
implying that $p^2H/H \in \sr{P}(G/H)$ by the inductive hypothesis.

Since $\sr{P}$ is prime, in order to conclude that $pH/H\in \sr{P}(G/H)$, it is sufficient to show $Q( \sr{P}, pH/H, pH/H)$ holds. Since $G$ is Abelian, this means proving that for all $L, J, J' \leq G$ satisfying $L \geq J$ and $L \geq J'$, $J \leq H$ and $J' \leq H$ we have
\begin{equation*}\left(\N{J}{L}\circ \res{J}{H}(pH/H)\right) \cdot \left(\N{J'}{L}\circ \res{J'}{H}(pH/H)\right) \in \sr{P}(G/L)\end{equation*}
which simplifies to
\begin{equation*}\N{J}{L}(pJ/J)\cdot \N{J'}{L}(pJ'/J') \in \sr{P}(G/L).\end{equation*}

If we suppose that $J = J' = H$, then 
$$\N{J}{L}(pJ/J) \cdot \N{J'}{L}(pJ'/J') = \N{H}{L}(p^2H/H)$$
must be in $\sr{P}(G/L)$, since $p^2H/H \in \sr{P}(G/H)$. Now, without loss of generality, suppose $J < H$. Then the inductive hypothesis tells us that $pJ/J \in \sr{P}(G/J)$ which implies that $\N{J}{L}(pJ/J) \cdot \N{J'}{L}(pJ'/J') \in \sr{P}(G/L)$ by the ideal properties. Therefore $Q( \sr{P}, pH/H, pH/H)$ holds, implying $p H/H \in \sr{P}(G/H)$.
\end{proof}


Note that \cref{thm:simple_gens} has the immediate corollary that if $pe/e\in \sr{P}(G/e)$, then $pH/K\in \sr{P}(G/H)$ for all $K\leq H\leq G$. This follows from the multiplication laws in the Burnside ring: $(pH/H)\cdot(H/K) = pH/K$.

For any prime ideal $\sr{P}$ of $\burn_G$, $\sr{P}(G/e)$ is a prime ideal of $\ZZ$ in the ring theoretic sense \cite[Prop. 2.6]{nakaoka:2012}. For $p$ a prime or zero we say that $\sr{P}$ is an \emph{ideal over $p$} if $\sr{P}(G/e) = (p)$. 

\begin{theorem}\label{thm:p-gens}
Let $G$ be a Abelian group, $\sr{P}$ be a prime ideal of $\burn_G$ over $p$. Then for all $K\leq H\leq G$ satisfying $p \mid |H:K|$, we have $H/K \in \sr{P}(G/H)$.
\end{theorem}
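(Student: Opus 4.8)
The plan is to induct on the order of $H$, mirroring the structure of the proofs of \cref{thm:simple_gens} and \cref{lem:simple_gen_lem}. The base case is $H = e$: there $p \mid |e:e|$ is impossible (unless $p = 0$, in which case $|e:e| = 1$ is never divisible by $0$), so the statement is vacuous. For the inductive step, fix $K \leq H \leq G$ with $p \mid |H:K|$ and assume the result holds for all subgroups of $G$ of order smaller than $|H|$. Since $G$ is Abelian, pick $K \leq L \leq H$ with $|L:K| = \ell$ prime. I will split into cases according to whether $\ell = p$ and whether $L = H$.

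First suppose $L \neq H$, so $|H| > |L|$. If $p \mid |L:K|$, then by induction $L/K \in \sr{P}(G/L)$, and applying $\tr{L}{H}$ (using the Abelian formula $\tr{L}{H}(L/K) = |H:L|\, L/K = H/K$, or more precisely that the transfer of $L/K$ as an $L$-set is $H \times_L (L/K) \cong H/K$) gives $H/K = \tr{L}{H}(L/K) \in \sr{P}(G/H)$ by the transfer-closure of an ideal. If $p \nmid |L:K|$, then since $p \mid |H:K| = |H:L|\cdot|L:K|$ we get $p \mid |H:L|$; pick $L \leq L' \leq H$ with $L'/L$ realizing a $p$-power index step, so $p \mid |L':K|$ with $|L'| < |H|$ by the previous case applied with $L'$ in place of $H$... actually it is cleaner to observe: among all subgroups between $K$ and $H$, some proper subgroup $H'$ of $H$ containing $K$ has $p \mid |H':K|$ whenever $p \nmid |H:H'|$, and such an $H'$ exists unless $H/K$ is a $p$-group with $H \neq K$... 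I will instead handle this uniformly by the following cleaner induction.

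\textbf{Cleaner approach.} Induct on $|H:K|$. If $p \mid |H:K|$, choose a chain $K = K_0 \leq K_1 \leq \cdots \leq K_r = H$ with each $|K_{i+1}:K_i|$ prime. Let $i$ be the \emph{largest} index with $p \mid |K_{i+1}:K_i|$, i.e. $|K_{i+1}:K_i| = p$ and $p \nmid |H:K_{i+1}|$. It suffices to show $K_{i+1}/K_i \in \sr{P}(G/K_{i+1})$ — then transferring up from $K_{i+1}$ to $H$ gives $\tr{K_{i+1}}{H}(K_{i+1}/K_i) = H/K_i$, and multiplying by the class $K_i/K \in \burn_G(G/K_i)$... hmm, the indexing requires care since $H/K$ is not literally $(H/K_i)\cdot(K_i/K)$. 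Let me instead reduce to the single-prime-step case $|H:K| = p$, prove that, and then build up. To pass from $H/K \in \sr P(G/H)$ (with $|H:K|=p$, $K \leq H$) to a general $K' \leq K \leq H$ with $p \mid |H:K'|$: write $H/K' $, restrict or use that $\res{K}{H}(H/K) = \sum K/K_j$ over double cosets... For an Abelian group, $\res{K}{H}(H/K)$ is $H/K$ viewed as a $K$-set, which is $|H:K|$ copies of $K/K = $ trivial, i.e. $p \cdot K/K$; that is not directly helpful. The workable move is transfer in the other direction: if $p \mid |H:K'|$ and $K' \leq K$, I want to produce $H/K'$. Take a prime-step chain from $K'$ to $H$; at the (last) step where the index is $p$, say between $J$ and $J'$ with $|J':J| = p$, the inductive hypothesis on smaller index gives $J'/J \in \sr P(G/J')$, and since $\burn_G$ is a Tambara functor the transfer $\tr{J'}{H}$ sends $J'/J \mapsto H \times_{J'} (J'/J) = H/J$, so $H/J \in \sr P(G/H)$; but I want $H/K'$ with $K' \leq J$, and $H/K' = (H/J) \cdot_? $ — again the multiplicative bookkeeping.

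\textbf{The actual plan I will commit to.} Reduce first to proving: if $|H:K| = p$ then $H/K \in \sr P(G/H)$; and if $p \mid |H:K|$ with $|H:K| = p \cdot m$, factor the quotient map $G/K \to G/H$ and use that $H/K = \tr{J}{H}(J/K)$ whenever $K \leq J \leq H$ with $|J:K|$ the $p$-part — wait, $\tr{J}{H}(J/K) = H \times_J (J/K) \cong H/K$ requires $|H:J|$ to be absorbed, and indeed $H \times_J (J/K) = H/K$ as $H$-sets regardless of indices, so $H/K = \tr{J}{H}(J/K)$ for \emph{any} $K \leq J \leq H$. Hence it suffices to find \emph{some} $J$ with $K \leq J \leq H$, $|J| < |H|$ or $|J:K|$ small, and $J/K \in \sr P(G/J)$. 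Choosing $J$ so that $|J:K| = p$ (possible since $p \mid |H:K|$: take $J$ in a prime-step chain from $K$ right after $K$, reordering so the $p$-step comes first — valid as $G$ is Abelian so subgroup lattice intervals are products) reduces everything to the case $|H:K| = |J:K| = p$. For that case: by \cref{thm:simple_gens} we would want $pe/e \in \sr P(G/e)$, which holds since $\sr P(G/e) = (p)$; so $pK/K \in \sr P(G/K)$ by \cref{thm:simple_gens}. Now run exactly the computation in the proof of \cref{thm:simple_gens}'s inductive step at the prime $p$: $\tr{K}{H}(pK/K) = pH/K$ and $\N{K}{H}(pK/K) = pH/H + (p^{p-1}-1)H/K$, giving $p^2 H/H \in \sr P(G/H)$, hence $p^2(H/H) \in \sr P$; but I need $H/K$, not $p H/H$. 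So instead compute $\N{K}{H}(K/K') $ type expressions directly with the \emph{actual} generator. Concretely: I expect the key identity to be, for $|H:K| = p$, that $\N{K}{H}(pK/K) - p\cdot(\text{something}) $ isolates $H/K$; tracking the Nakaoka formula, $\N{K}{H}(pK/K) = pH/H + \frac{p^p - p}{p}H/K = pH/H + (p^{p-1}-1)H/K$, and $\tr{K}{H}(pK/K) = pH/K$, so $\N{K}{H}(pK/K) - \frac{p^{p-1}-1}{p}\tr{K}{H}(pK/K) = pH/H$ — this only yields $pH/H$. To get $H/K$ itself I will need a genuinely new input, likely primality: show $Q(\sr P, H/K, H/K)$ or $Q(\sr P, H/K, \text{generator})$ holds using that $p^2 H/H$ and $pH/H$-type elements are already in $\sr P$, together with $p \mid |H:K|$, to force $H/K \in \sr P(G/H)$; alternatively use that $H/K \cdot H/K = p\cdot H/K$ as $H$-sets (since $(H/K)\times(H/K) \cong |H:K|$ copies of $H/K$ when $K \trianglelefteq H$, Abelian) so $H/K$ is a zero-divisor-like element and $(H/K)^2 - p(H/K) = 0 \in \sr P$; combined with $pH/H \in \sr P$ this gives $(H/K)^2 = p(H/K) \in \sr P$ hmm still not linear. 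The cleanest finish: $(H/K)^2 = p\,(H/K)$ and we will show $H/K - pH/H \cdot(\text{unit-ish})$... I will ultimately invoke \cref{thm:nak_prime}: verify $Q(\sr P, H/K, H/K)$, which by the fixed-point/mark description reduces to checking that all products $\N{J}{L}c_{\cdot}\res{}{}(H/K) \cdot \N{J'}{L}c_{\cdot}\res{}{}(H/K)$ land in $\sr P(G/L)$ — using the Abelian simplifications of \cref{cor:structure map fmlas}, each such restriction of $H/K$ is a $p$-divisible class or lies below a smaller subgroup where induction applies, so the product is in $\sr P$; primality then yields $H/K \in \sr P(G/H)$.

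\textbf{Main obstacle.} The crux is the single-step case $|H:K| = p$: passing from the ``easy'' consequences $pH/H, p^2H/H \in \sr P(G/H)$ to the genuinely stronger $H/K \in \sr P(G/H)$. I expect this requires the primality criterion \cref{thm:nak_prime} applied to the pair $(H/K, H/K)$ (using $(H/K)^2 = p\cdot H/K$ in the Abelian case), and the bookkeeping of verifying $Q(\sr P, H/K, H/K)$ — checking every norm-of-conjugate-of-restriction product lies in $\sr P(G/L)$ via the explicit formulas of \cref{cor:structure map fmlas} and downward induction — is where the real work lies. Everything else (the reduction to $|H:K| = p$ via transfers $\tr{J}{H}(J/K) = H/K$, and the base/vacuous cases) is routine.
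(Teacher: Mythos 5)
Your concluding plan is the paper's: induct on $|H|$, use \cref{thm:simple_gens} to get $pH/H \in \sr{P}(G/H)$, and establish $H/K \in \sr{P}(G/H)$ by checking $Q(\sr{P}, H/K, H/K)$ via \cref{thm:nak_prime}. The identity $H/K \times H/K = |H:K|\,H/K$, which you note, is exactly how the paper handles the case $J = J' = H$: writing $|H:K|=pn$, one has $|H:K|\,H/K = (pH/H)\cdot(nH/K) \in \sr{P}(G/H)$. The preliminary reduction to $|H:K|=p$ by transfers, which caused most of your bookkeeping anxiety, is unnecessary — the paper runs the argument directly for general $|H:K| = pn$.

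The part you label ``the real work'' — verifying the $Q$-condition when one of $J, J'$ is a proper subgroup of $H$ — is left as a sketch and is precisely where the content of the theorem sits, so the proposal as written is not a proof. The step that closes it is short. For $J < H$ with $G$ Abelian, every point of $H/K$ has $J$-stabilizer $J \cap K$, so
\[
\res{J}{H}(H/K) = \frac{|H:K|}{|J:J\cap K|}\, J/(J\cap K).
\]
If $p \mid |J:J\cap K|$, the inductive hypothesis applied to $J$ (which has $|J|<|H|$) gives $J/(J\cap K) \in \sr{P}(G/J)$. If $p \nmid |J:J\cap K|$, then since $p \mid |H:K|$ the integer coefficient $|H:K|/|J:J\cap K|$ is divisible by $p$, and $pJ/J \in \sr{P}(G/J)$ from \cref{thm:simple_gens} forces the restriction into $\sr{P}(G/J)$. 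Either way $\res{J}{H}(H/K) \in \sr{P}(G/J)$, hence $\N{J}{L}\circ\res{J}{H}(H/K) \in \sr{P}(G/L)$ and the product lands in $\sr{P}(G/L)$. Your intuition (``a $p$-divisible class or lies below a smaller subgroup where induction applies'') is exactly this dichotomy, but it had to be argued, not asserted.
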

\begin{proof}
We will perform induction on the order of $H$. Note that by \cref{thm:simple_gens}. we have $p H/H \in \sr{P}(G/H)$. Consider any $K \leq H$ such that $|H:K| = pn$ for some $n\in \NN$. We wish to show that $H/K \in \sr{P}(G/H)$. Since $\sr{P}$ is prime, it is equivalent to show that $Q(\sr{P}, H/K, H/K)$ holds. Let $J$, $J'$ and $L$ such that $J,J'\leq L$ and $J,J'\leq H$ (as in \cref{def:Q} with $H' = H$). We wish to show that\[
(\N{J}{L}\circ \res{J}{H}(H/K))\cdot(\N{J'}{L} \circ \res{J'}{H}(H/K)) \in \sr{P}(G/L).
\]In the case that $J = J' = H$, we have $$H/K \times H/K = |H:K|H/K = (pH/H)\times n H/K\in\sr{P}(G/H).$$
The only case that remains to check is when one of $J,J'$ is a strict subgroup of $H$. Without loss of generality, take $J < H$. Our goal is now to show that $\res{J}{H}(H/K) \in \sr{P}(G/J)$, from which it follows that $\N{J}{L}\circ \res{J}{H}(H/K)\in \sr{P}(G/L)$.

The restriction $\res{H}{J}(H/K)$ is the set $H/K$ with action restricted to $J$. Thus the stabilizer of any element is $K\cap J$, so $$\res{J}{H}(H/K) = \frac{|H:K|}{|J:K\cap J|}J/(K\cap J).$$
If $p\mid |J:K \cap J|$, then the inductive hypothesis yields the desired conclusion. On the other hand, if $p\nmid |J:K \cap J|$ then we are done by \cref{thm:simple_gens} since $p \mid |H:K|$. Therefore $Q( \sr{P}, H/K, H/K)$ holds, implying $H/K \in \sr{P}(G/H)$ as desired.
\end{proof}

\begin{corollary}
Let $G$ be a finite Abelian group. Suppose that $\sr{P}$ is a prime ideal of $\burn_G$ over $p$. Then $\fr{p}_{G, p} \subseteq \sr{P}$.
\end{corollary}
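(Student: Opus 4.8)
The plan is to combine \cref{thm:p-gens} with \cref{thm:simple_gens} to show that the three types of generators identified in \cref{thm:first_gen} all lie in $\sr{P}(G/G)$, which gives the containment $\fr{p}_{e,p}\subseteq\sr{P}$, and then invoke the maximality of $\fr{p}_{e,p}$ among ideals over $p$ to conclude equality. Note that for a $p$-group $G$, the subgroup $e$ is the only subgroup $I\leq G$ with $I\preccurlyeq_G e$ (namely $I=e$ itself), so the description of $\fr{p}_{e,p}(G/H)$ collapses to $\ker(\varphi_{H,p}^e)$, i.e.\ the set of virtual $H$-sets whose underlying cardinality is divisible by $p$. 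Moreover, since $G$ is a $p$-group, \emph{every} proper subgroup $H<G$ has $p\mid|G:H|$, so \cref{thm:first_gen} tells us $\fr{p}_{e,p}(G/G)$ is generated by $pG/G$ together with $G/H$ for all proper $H<G$ — the type (3) generators become redundant.

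First I would verify the containment $\fr{p}_{e,p}\subseteq\sr{P}$ levelwise. Fix $H\leq G$; I need $\fr{p}_{e,p}(G/H)\subseteq\sr{P}(G/H)$. Using the isomorphism $\burn_G(G/H)\cong\burn_H(H/H)$ from \cref{rmk:isom AG and AH}, and the fact that $H$ is itself an Abelian $p$-group, the generator description of \cref{thm:first_gen} applies: $\fr{p}_{e,p}(G/H)$ is generated (as a ring-theoretic ideal of $\burn_G(G/H)$) by $pH/H$ and by $H/K$ for all $K<H$. Now \cref{thm:simple_gens} gives $pH/H\in\sr{P}(G/H)$ since $pe/e\in\sr{P}(G/e)$ (because $\sr{P}$ is over $p$). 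And \cref{thm:p-gens} gives $H/K\in\sr{P}(G/H)$ for every $K<H$, since $p\mid|H:K|$ automatically in a $p$-group. Hence every generator of $\fr{p}_{e,p}(G/H)$ lies in $\sr{P}(G/H)$, so $\fr{p}_{e,p}(G/H)\subseteq\sr{P}(G/H)$ as $\sr{P}(G/H)$ is a (ring-theoretic) ideal.

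For the reverse containment, I would use that $\fr{p}_{e,p}$ is maximal among the proper ideals over $p$. Concretely: the quotient Tambara functor $\burn_G/\fr{p}_{e,p}$ at level $G/H$ is $\burn_G(G/H)/\fr{p}_{e,p}(G/H)\cong\ZZ/p$ via the mark map $\varphi_{H,p}^e$, which is a field (or $\ZZ$ if $p=0$). Since $\sr{P}$ is a proper ideal containing $\fr{p}_{e,p}$, at each level $\sr{P}(G/H)/\fr{p}_{e,p}(G/H)$ is a proper ideal of $\ZZ/p$, hence zero; therefore $\sr{P}(G/H)=\fr{p}_{e,p}(G/H)$ for all $H$, giving $\sr{P}=\fr{p}_{e,p}$. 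Alternatively, one can argue directly: any element of $\sr{P}(G/H)$ not in $\fr{p}_{e,p}(G/H)$ would have $\varphi_{H,p}^e$-image a unit in $\ZZ/p$, and then subtracting a suitable integer multiple of $H/H$ (which is in $\sr{P}$ only if... — careful, $H/H=1$ is not in $\sr{P}$) — so the cleaner route is the quotient-field argument.

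The main obstacle, and the step requiring the most care, is making the reduction in the first paragraph rigorous: I must be sure that \cref{thm:first_gen}'s generator list for $\fr{p}_{C,p}(G/G)$ transfers correctly to $\fr{p}_{e,p}(G/H)$ for arbitrary $H\leq G$ under the identification $\burn_G(G/H)\cong\burn_H(H/H)$, in particular that $\preccurlyeq_G$ restricts to $\preccurlyeq_H$ appropriately when $C=e$ (it does, trivially, since the only subgroup $\preccurlyeq$ the trivial group is the trivial group). Everything else is a direct application of the three theorems already proved for Abelian groups, specialized to the $p$-group case where the divisibility hypotheses $p\mid|H:K|$ hold automatically for all proper inclusions.
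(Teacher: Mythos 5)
Your proof follows the same approach as the paper's: combine \cref{thm:simple_gens} and \cref{thm:p-gens} to get $\fr{p}_{e,p}\subseteq\sr{P}$, then use maximality of $\fr{p}_{e,p}$ to force equality. The one place to be careful is your invocation of \cref{thm:first_gen} to describe the ring-theoretic generators of $\fr{p}_{e,p}(G/H)$: that theorem is stated only for \emph{cyclic} $G$, and an Abelian $p$-group $G$ (hence $H\leq G$) need not be cyclic, e.g.\ $G=C_p\times C_p$. Fortunately the generator description you want does not actually need \cref{thm:first_gen} — it is elementary for any Abelian $p$-group $H$: since $I\preccurlyeq_G e$ forces $I=e$, we have $\fr{p}_{e,p}(G/H)=\ker\varphi^e_{H,p}$, and for $X=\sum_{K\leq H}m_K\,H/K$ every index $|H:K|$ with $K<H$ is divisible by $p$, so $\varphi^e_H(X)\equiv_p m_H$; thus $X\in\ker\varphi^e_{H,p}$ iff $p\mid m_H$, which gives exactly the generating set $\{pH/H\}\cup\{H/K : K<H\}$. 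Replacing the citation to \cref{thm:first_gen} with this one-line computation makes the argument correct in full generality, and then the rest — $pH/H\in\sr{P}(G/H)$ by \cref{thm:simple_gens}, $H/K\in\sr{P}(G/H)$ for $K<H$ by \cref{thm:p-gens}, and maximality via the field quotient $\ZZ/p$ — is exactly the paper's reasoning, just spelled out.
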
\begin{proof}
By \cref{rmk:pep gens}, we can describe $\fr{p}_{G, p}$ in terms of explicit generators. If $\sr{P}$ is a prime ideal over $p$, then \cref{thm:simple_gens} and \cref{thm:p-gens} imply that $\sr{P}$ contains these generators and hence contains $\fr{p}_{G,p}$.
\end{proof}

So far, we have been able to reduce the generators (1) and (2) in \cref{thm:first_gen} to the condition that $\sr{P}$ is prime and $p\cdot e/e \in \sr{P}(G/e)$. We continue this line of reasoning by attacking generators of the form (3). 

\begin{proposition}
Let $G$ be a cyclic group, $C \leq G$ and $\sr{I}$ be an ideal of $\burn_G$. Suppose that for all $L\leq G$ and all $L\cap C \leq K \leq L$,
$$L/K - |L:K|L/L \in \sr{I}(G/L) .$$
Then for all $L\leq G$, $J\leq C$ and each $M_J\leq L$ maximal amongst the subgroups of $L$ that intersect $C$ at $J$ and $J \leq H \leq M_J$ we have
$$L/H - |M_J:H|L/M_J \in \sr{I}(G/L).$$
\end{proposition}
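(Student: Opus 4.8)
The plan is to exploit the fact that the hypothesis is quantified over \emph{all} subgroups of $G$: in particular it applies with the subgroup $M_J$ itself playing the role of $L$. This lets us first produce the desired element in $\sr{I}(G/M_J)$ and then push it up to $\sr{I}(G/L)$ along a transfer map, with no induction required.

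First I would record the elementary bookkeeping. By the defining property of $M_J$ we have $M_J \cap C = J$, and since $J \leq H \leq M_J$ the chain $M_J \cap C \leq H \leq M_J$ holds. Hence, applying the hypothesis with ``$L$'' replaced by $M_J$ and ``$K$'' replaced by $H$ (a legal choice precisely because $M_J \cap C \leq H \leq M_J$), we obtain
$$M_J/H - |M_J:H|\,M_J/M_J \in \sr{I}(G/M_J).$$

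Next I would transfer this relation up to $L$ using $\tr{M_J}{L}\colon \burn_G(G/M_J)\to\burn_G(G/L)$. Two standard facts are needed: $\tr{M_J}{L}$ is additive, and for any $K\leq M_J$ one has $\tr{M_J}{L}(M_J/K) = L\times_{M_J}(M_J/K) \cong L/K$ as $L$-sets (by transitivity of induction, or simply because both sides are transitive with point stabilizer $K$ and cardinality $|L:K|$). Combining these, $\tr{M_J}{L}\bigl(M_J/H - |M_J:H|\,M_J/M_J\bigr) = L/H - |M_J:H|\,L/M_J$. Finally, the transfer axiom for Tambara ideals, $\tr{M_J}{L}(\sr{I}(G/M_J))\subseteq \sr{I}(G/L)$, yields $L/H - |M_J:H|\,L/M_J \in \sr{I}(G/L)$, as desired.

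I do not expect a genuine obstacle here: the argument is essentially a one-line observation that the hypothesis is ``self-similar'' across subgroups, so one can localize to $M_J$ and transfer. The only points requiring care are verifying that $M_J$ really meets $C$ in exactly $J$ (immediate from its maximality definition) and that the transfer of the transitive $M_J$-set $M_J/K$ is the transitive $L$-set $L/K$; both are routine.
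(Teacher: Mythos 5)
Your proposal is correct and matches the paper's argument: both apply the hypothesis with $M_J$ in the role of $L$ and $H$ in the role of $K$, then transfer along $\tr{M_J}{L}$. (In fact your version is slightly cleaner, since the paper's first display has an apparent typo writing $\sr{I}(G/H)$ where $\sr{I}(G/M_J)$ is meant.)
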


\begin{proof}
Note that $M_J\cap C = J \leq H \leq M_J$, so by assumption $$M_J/H - |M_J:H|M_J/M_J \in \sr{I}(G/M_J)$$
which implies that 
 $$\tr{M_J}{L}(M_J/H - \abs{M_J:H}M_J/M_J) =  L/H - \abs{M_J:H}L/M_J \in \sr{I}(G/L).$$
\end{proof}

Our progress thus far can be summed up as follows:

\begin{corollary}\label{cor:gens}
Let $G$ be a cyclic group and suppose $\sr{P}$ is a prime ideal above $p$ and $C \leq G$. If for each $H\leq G$ and $H \cap C \leq K \leq H$ we have 
$$H/K - |H:K|H/H \in \sr{P}(G/H),$$
then 
$\fr{p}_{C, p} \subseteq \sr{P}$.
\end{corollary}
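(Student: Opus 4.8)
The plan is to combine the three reductions already established in this section so that, under the hypotheses of the corollary, the three families of generators from \cref{thm:first_gen} all lie in $\sr{P}(G/G)$, and then (since they generate $\fr{p}_{C,p}(G/G)$ and $\fr{p}_{C,p}$ is a Tambara ideal) conclude $\fr{p}_{C,p}\subseteq\sr{P}$. More precisely, the hypothesis that $\sr{P}$ is prime above $p$ means $pe/e\in\sr{P}(G/e)$, so \cref{thm:simple_gens} gives $pH/H\in\sr{P}(G/H)$ for all $H$ (generators of type (1)), and \cref{thm:p-gens} gives $H/K\in\sr{P}(G/H)$ whenever $p\mid|H:K|$ (generators of type (2)). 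The content of the explicit hypothesis of this corollary — that $H/K-|H:K|H/H\in\sr{P}(G/H)$ for all $H\cap C\leq K\leq H$ — feeds into the preceding proposition, which promotes it to $L/H-|M_J:H|L/M_J\in\sr{P}(G/L)$ for every $J\leq C$, every maximal $M_J$ (intersecting $C$ at $J$) inside $L$, and every $J\leq H\leq M_J$; specialising $L=G$ and $H=K\in S_J$ yields exactly the type-(3) generators $G/K-|M_J:K|G/M_J$.

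The remaining step is to assemble these into the conclusion. By \cref{thm:first_gen}, the ideal $\fr{p}_{C,p}(G/G)$ of the ring $\burn_G(G/G)$ is generated, as a ring-theoretic ideal, by the elements of types (1), (2), (3); we have just shown each of these lies in $\sr{P}(G/G)$, and since $\sr{P}(G/G)$ is a ring-theoretic ideal this gives $\fr{p}_{C,p}(G/G)\subseteq\sr{P}(G/G)$. To upgrade this to the containment of Tambara ideals $\fr{p}_{C,p}\subseteq\sr{P}$, I would argue level by level: for any $H\leq G$, use \cref{rmk:isom AG and AH} to identify $\burn_G(G/H)\cong\burn_H(H/H)$, observe that $\fr{p}_{C,p}$ restricted to this identification is $\fr{p}_{C\cap H,p}$ for the group $H$ (directly from \cref{def:the_ideals}, since $I\preccurlyeq_G C$ for $I\leq H$ is governed by $C\cap H$ when $G$ is abelian), and that $\sr{P}$ restricted to $H$ is again a prime ideal above $p$ of $\burn_H$ satisfying the same hypothesis; then apply the $G/G$-level statement already proved, now for the group $H$.

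The main obstacle I anticipate is this last descent step: one must check carefully that the hypotheses of the corollary are inherited by the ``restriction to $H$'' of $\sr{P}$ — in particular that $\sr{P}$ localized at $H$ is still prime (which follows from \cref{thm:nak_prime} or from the general theory of Tambara ideals, since the structure maps among subgroups of $H$ are a sub-collection of those in $\burn_G$), that it is still above $p$ (immediate, as $e\leq H$), and that the generating relations $H'/K'-|H':K'|H'/H'$ for $H'\cap(C\cap H)\leq K'\leq H'\leq H$ still hold — which they do, since these are a subset of the relations assumed in $\burn_G$ and $C\cap H\cap H' = C\cap H'$. Once these compatibilities are in place, the argument is a clean bootstrap from \cref{thm:first_gen}, the preceding proposition, \cref{thm:simple_gens}, and \cref{thm:p-gens}, with no further computation required.
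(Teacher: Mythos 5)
Your assembly is essentially the intended argument (the paper states the corollary without proof, as a summary of the preceding results), and the main steps are correct: \cref{thm:simple_gens} and \cref{thm:p-gens} supply the type-(1) and type-(2) generators of \cref{thm:first_gen}, and the proposition immediately preceding the corollary supplies the type-(3) generators.

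One simplification worth noting in the descent step: you do not need to restrict $\sr{P}$ to $\burn_H$ and re-verify that the restriction is a prime ideal above $p$. The three inputs already deliver their conclusions at \emph{every} level $G/H$ of $\sr{P}$ as an ideal of $\burn_G$: \cref{thm:simple_gens} gives $pH/H \in \sr{P}(G/H)$ for all $H\leq G$, \cref{thm:p-gens} gives $H/K\in\sr{P}(G/H)$ for all $K\leq H\leq G$ with $p\mid|H:K|$, and the proposition gives $L/K - |M_J:K|L/M_J \in \sr{P}(G/L)$ for all $L\leq G$. So for each fixed $H\leq G$, identify $\burn_G(G/H)\cong\burn_H(H/H)$ via \cref{rmk:isom AG and AH} and note $\fr{p}_{C,p}(G/H) = \fr{p}_{C\cap H,p}(H/H)$ (since $G$ is Abelian, $I\preccurlyeq_G C$ for $I\leq H$ amounts to $I\leq C\cap H$); then \cref{thm:first_gen}, applied to the cyclic group $H$ with subgroup $C\cap H$, shows this ring-theoretic ideal is generated by elements you have already placed in $\sr{P}(G/H)$. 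This gives the level-wise containment directly without invoking any separate restriction-of-prime-ideals lemma (which in any case only appears later, in \cref{lem:res to subgp}). Your instinct to work level by level rather than trying to generate the whole Tambara ideal from the top level is correct and necessary.
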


For $n$ dividing the order of (cyclic) $H\leq G$, we denote the unique transitive $H$-set of order $n$ in $A(H)$ by $t_n$. That is, $t_n = H/K$ where $K$ is the unique subgroup of $H$ with $\abs{H:K}=n$. We will also use the notation $\nu_p(n)\geq 0$ to denote the exponent of a prime $p$ in the prime factorization of $n$.

Multiplication in the Burnside ring is then given by
$$t_n \times t_m = \text{GCD}(n, m) t_{\text{LCM}(n, m)}.$$
For details on how the structure maps of $\burn_G$ interact with this notation, see \cite[\S 3]{calle/ginnett:2019}.
With this notation we can re-write the elements in the previous corollary as
$$t_{n}-n \in \sr{P}(G/H) \text{ where } n \mid |H:H\cap C|.$$

However, whenever $m$ and $n$ are relatively prime,
$$(t_n-n)\times (t_m-m) = t_{nm} - mt_n - n t_m + nm = (t_{nm} - nm) - n(t_m-m) - m(t_n-n),$$
so the elements in \cref{cor:gens} are generated by the smaller set
$$t_{p^k}-p^k \in \sr{P}(G/H) \text{ where } p^k\mid |H:C\cap H|.$$

However, via \cite[Lemma 3.6]{calle/ginnett:2019}, we can simplify these generators even more. In fact, for $K\leq H \leq G$, if $t_p-p\in\sr{I}(G/K)$, then $t_{p^{i+1}} - p^{i+1}\in\sr{I}(G/H)$ for all $0\leq i\leq \nu_p(\abs{H:K})$.
This knowledge combined with our previous discussion yields the following theorem:



\begin{theorem}
\label{thm:tamb_gens}
Suppose $G=C_N$ is a cyclic group and $C_k\leq C_N$, and consider a prime ideal $\sr{P}\subseteq\burn_{C_N}$ over $q$. Suppose that for each prime $p\mid N/k$ that
$t_p-p$ is in $\sr{P}(G/C_{pk/p^{\nu(k)}})$. Then 
$\fr{p}_{C, q} \subseteq \sr{P}$.

In particular, if $\sr{P}|_{C_{N/p^{\nu(N)}}} = \fr{p}_{C_{k/p^{\nu(k)}},q}$ for each prime $p\mid N$, then 
$\fr{p}_{C_k, q} \subseteq \sr{P}$.
\end{theorem}

\section{Calculating the Spectrum for a finite Cyclic Group}\label{sec:pf}

We now turn to proving that our collection of ideals is the complete spectrum of the Burnside Tambara functor of a cyclic group, building upon previous literature which calculated the spectrum in the case of cyclic $p$-groups. 

\begin{theorem}[{Nakaoka \cite[Theorem 6.12]{nakaoka:2014}}]
\label{thm:Nak_main}
The Tambara spectrum of a cyclic $p$-group is
$$\Spec(\burn_{C_{p^n}}) = \{\fr{p}_{C, q} \mid C \leq C_{p^n}, q \text{ prime or zero}\}.$$
\end{theorem}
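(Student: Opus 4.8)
The plan is to read \cref{thm:Nak_main} as a faithful translation of Nakaoka's classification \cite[Theorem 6.12]{nakaoka:2014} into the language of the ideals $\fr{p}_{C,q}$, so that what actually has to be checked is that our family is neither too large nor too small. One containment is already available: since $C_{p^n}$ is Abelian, every subgroup is normal, so \cref{thm:PCp prime} applies and gives $\fr{p}_{C,q}\in\Spec(\burn_{C_{p^n}})$ for all $C\le C_{p^n}$ and all $q$ prime or zero; hence the right-hand side is contained in $\Spec(\burn_{C_{p^n}})$. For the reverse containment I would invoke Nakaoka's theorem, which supplies an explicit enumeration of $\Spec(\burn_{C_{p^n}})$, and exhibit for each prime $\sr{P}$ on his list a pair $(C,q)$ with $\sr{P}=\fr{p}_{C,q}$.

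To carry out the identification, recall that by \cref{def:the_ideals}, and using that the relation $I\preccurlyeq_G C$ reduces to $I\le C$ in the Abelian case, the $i$-th level of our ideal is $\fr{p}_{C,q}(C_{p^n}/C_{p^i})=\bigcap_{I\le C_{p^i}\cap C}\ker(\varphi^I_{C_{p^i},q})$. Since the subgroup lattice of $C_{p^n}$ is the chain $e<C_p<\cdots<C_{p^n}$, both $C$ and $C_{p^i}\cap C$ are recorded by a single exponent, and the cases $q=0$, $q$ a prime $\ne p$, and $q=p$ can be treated separately. The comparison is made concrete by results already proved: \cref{thm:first_gen} gives explicit ring-theoretic generators of $\fr{p}_{C,q}(C_{p^n}/C_{p^n})$, while \cref{thm:tamb_gens} shows that a prime $\sr{P}$ over $q$ contains $\fr{p}_{C,q}$ as soon as $t_p-p\in\sr{P}(C_{p^n}/\text{Syl}_p(C)^+)$; so one verifies that each level of Nakaoka's prime satisfies exactly these generator conditions and then pins down $(C,q)$. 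Alternatively, one can avoid quoting Nakaoka and argue directly by induction on $n$: for a prime $\sr{P}$ over $q$, the case $q=p$ is already the corollary stated after \cref{thm:p-gens} (so $\sr{P}=\fr{p}_{e,p}$), and for $q=0$ or $q$ a prime $\ne p$ one lets $C$ be determined by the largest level at which $t_p-p$ fails to lie in $\sr{P}$, deduces $\fr{p}_{C,q}\subseteq\sr{P}$ from \cref{thm:tamb_gens}, and upgrades this to equality using primality together with the inclusion structure of \cref{thm:containment}.

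The main obstacle is bookkeeping rather than any new idea: one must unwind Nakaoka's own notation for the primes of $\burn_{C_{p^n}}$, which is phrased through his quotient constructions rather than through kernels of mark homomorphisms, and confirm that the identification is consistent simultaneously at all levels $C_{p^n}/C_{p^i}$ and compatible with restriction, transfer, and norm. The one genuinely delicate point is the case $q=p$: by part~(iii) of \cref{thm:containment} the containments among the $\fr{p}_{C,p}$ are governed by $O^p(-)$, and since a $p$-group has $O^p(C)=e$ for every $C\le C_{p^n}$, all of the ideals $\fr{p}_{C,p}$ coincide with $\fr{p}_{e,p}$ --- which is consistent with the uniqueness of the prime over $p$ noted in the corollary following \cref{thm:p-gens}. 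Nakaoka's list must therefore be read with this collapse in mind, so that the resulting correspondence is a genuine equality of sets rather than an over- or under-count.
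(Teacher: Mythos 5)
Your proposal is correct and follows essentially the same route as the paper: cite \cref{thm:PCp prime} for one inclusion, then translate Nakaoka's explicit enumeration (phrased via his $\mathcal{L}$/$\mathcal{S}$ constructions) into the $\fr{p}_{C,q}$ language level by level, noting the collapse of all $\fr{p}_{C,p}$ to the unique prime over $p$. Your parenthetical remark that one could instead argue directly by induction using \cref{thm:tamb_gens} and the corollary after \cref{thm:p-gens} is a reasonable alternative but is not what the paper does here; it is closer in spirit to the argument given later in \cref{thm:main_thm}.
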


\begin{proof}
Nakaoka's description of these ideals is rather different from our $\fr{p}_{C,p}$ definition, and we will not detail his methods and notation here, but rather point the reader to \cite[Proposition 3.4, Definition 3.5]{nakaoka:2014}. Essentially, Nakaoka uses the Tambara structure maps to find the ``largest ideals over'' and the ``smallest ideals over'' some ideal $\sr{I} \subseteq \burn_{C_{p^n}}$, denoted $\mathcal{L}\sr{I}$ and $\mathcal{S}\sr{I}$, respectively. To make the comparison between our ideals and Nakaoka's explicit, we take $C= C_{p^m}$ where $m \leq n$. Then, in Nakaoka's notation,
\begin{equation*}
  \fr{p}_{C, q} = \begin{cases} 
       \mathcal{L}^n(p) & q = p,\\
        \mathcal{L}^{n-m}(0) & q = 0,\\
        \mathcal{L}^{n-m}\mathcal{S}^m(q) & \text{otherwise.}
    \end{cases}
\end{equation*}
The case for $q=p$ is trivial, as there is only one prime ideal $\fr{p}_{C,p}$ satisfying $\fr{p}_{C,p}(C_{p^n}/e) = (p)$. To see the comparison for the other cases, note that by \cref{lem:phi_res} the following conditions are equivalent:\begin{enumerate}
    \item[(i)] for all $H\leq C$, $\varphi^H_{C_{p^n}}(X) \equiv_q 0$,
    \item[(ii)] for each $H\leq C$, $q$ divides the coefficient in front of $H/H$ in $\res{H}{C_{p^n}}(X)$.
\end{enumerate} Then, due to the computational rules for restrictions, this is equivalent to the condition that every coefficient in $\res{C_{p^m}}{C_{p^n}}(X)$ is a multiple of $q$ for all $m \leq n$.

The ideal $\mathcal{L}^{n-m}\mathcal{S}^m(q)$ is defined by setting $\mathcal{L}^{n-m}\mathcal{S}^m(q)(C_{p^n}/H) = (q)$ for each $H \leq C_p^m$ and then defining the rest of the levels as the inverse restriction of the level below it. The ideal $\mathcal{L}^{n-m}(0)$ is defined similarly replacing $(q)$ with $(0)$. 
\end{proof}

We will need the following lemma, which allows us to build up prime ideals of $\burn_G$ from the prime ideals of $\burn_H$ for $H\leq G$.

\begin{lemma}[Restriction to Subgroups] \label{lem:res to subgp}
Let $G$ be a finite Abelian group and  $I \leq G$. Then if $\sr{P}$ is a prime ideal of $\burn_G$, the restriction $\sr{P}|_{I}$ of $\sr{P}$ onto $\burn_I$ is a prime ideal of $\burn_I$. 
\end{lemma}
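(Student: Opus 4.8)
The plan is to verify that $\sr{P}|_I$, defined by $(\sr{P}|_I)(I/K) = \sr{P}(G/K)$ for $K \leq I$, is first an ideal of $\burn_I$ and then a prime ideal, using the characterization in \cref{thm:nak_prime}. Checking the ideal axioms of \cref{defn:tamb ideal} is routine: for $K \leq L \leq I$, the restriction, transfer, norm, and conjugation maps of $\burn_I$ agree with those of $\burn_G$ under the identification $\burn_G(G/K) \cong \burn_I(I/K)$ of \cref{rmk:isom AG and AH}, and since $\sr{P}$ already satisfies the closure conditions (1)--(4) for all subgroups of $G$, it satisfies them a fortiori for subgroups of $I$. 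So the content is primality.

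For primality, I would take $a \in \burn_I(I/H)$ and $b \in \burn_I(I/H')$ with $H, H' \leq I$, and suppose $Q(\sr{P}|_I, a, b)$ holds in $\burn_I$; the goal is to deduce $a \in (\sr{P}|_I)(I/H)$ or $b \in (\sr{P}|_I)(I/H')$. The key observation is that the proposition $Q(\sr{P}|_I, a, b)$, which quantifies over $L, K, K' \leq I$ and $g, g' \in I$, is implied by $Q(\sr{P}, a, b)$ interpreted in $\burn_G$ — but that is the wrong direction. Instead I want to show that $Q(\sr{P}|_I, a, b)$ in $\burn_I$ actually \emph{does} imply $Q(\sr{P}, a, b)$ in $\burn_G$, so that primality of $\sr{P}$ gives the conclusion. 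Since $G$ is Abelian, conjugation is trivial, so the relation in \cref{def:Q} reduces to: $\N{K}{L}\circ\res{K}{H}(a) \cdot \N{K'}{L}\circ\res{K'}{H'}(b) \in \sr{P}(G/L)$ for all $L \geq K, K'$ with $K \leq H$, $K' \leq H'$. Given such data with $L \leq G$ arbitrary, I would pull back to $L \cap I$: using the explicit formulas of \cref{cor:structure map fmlas} (or the Mackey-type relations), I would express $\res{L\cap I}{L}$ of the product in terms of norms and restrictions taking place entirely inside $\burn_I$, invoke $Q(\sr{P}|_I, a, b)$ to land in $\sr{P}(G/(L\cap I)) = (\sr{P}|_I)(I/(L\cap I))$, and then apply the norm $\N{L\cap I}{L}$ together with the ideal property (3) of $\sr{P}$ — which only costs an error term in $\N{L\cap I}{L}(0)$ — to conclude the product lies in $\sr{P}(G/L)$. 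The main obstacle is precisely this last maneuver: controlling the norm-of-zero correction term and checking that the product genuinely restricts into the image of the subgroup computation, since the elements $a, b$ live over $H, H' \leq I$ but $L$ ranges over all of $G$. I expect this to require a careful application of the double-coset formula for $\res{L\cap I}{L}\circ\N{K}{L}$ and the fact that $K, K' \leq I$ forces the relevant double cosets to simplify.

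Once $Q(\sr{P}, a, b)$ is established in $\burn_G$, primality of $\sr{P}$ yields $a \in \sr{P}(G/H) = (\sr{P}|_I)(I/H)$ or $b \in \sr{P}(G/H') = (\sr{P}|_I)(I/H')$, completing the proof via \cref{thm:nak_prime}. An alternative, possibly cleaner route worth mentioning: realize the inclusion $I \hookrightarrow G$ as inducing a Tambara functor morphism and identify $\sr{P}|_I$ as the preimage of $\sr{P}$ under it, then cite that preimages of prime ideals under Tambara morphisms are prime — but since the excerpt only records that \emph{surjective} morphisms induce nice behavior on spectra (via \cite[Theorem 4.9]{nakaoka:2011}), I would fall back on the direct $Q$-criterion argument above.
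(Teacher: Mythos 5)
Your overall strategy — show that $Q(\sr{P}|_I, a, b)$ in $\burn_I$ implies $Q(\sr{P}, a, b)$ in $\burn_G$, then invoke primality of $\sr{P}$ via \cref{thm:nak_prime} — is exactly the paper's. But the implementation you sketch has a genuine gap at the crucial step. You propose to restrict the product $\N{K}{L}\res{K}{H}(a)\cdot\N{K'}{L}\res{K'}{H'}(b)$ down to $L\cap I$, observe that the restriction lies in $\sr{P}(G/(L\cap I))$, and then ``apply the norm $\N{L\cap I}{L}$'' to conclude the product is in $\sr{P}(G/L)$. This does not conclude anything about the product: $\N{L\cap I}{L}\circ\res{L\cap I}{L}$ is far from the identity, so from $\res{L\cap I}{L}(z)\in\sr{P}(G/(L\cap I))$ you only get $\N{L\cap I}{L}(\res{L\cap I}{L}(z))\in\sr{P}(G/L)$, which carries no information about $z$ itself. (Also, since $K,K'\leq I$ and $K,K'\leq L$ force $K,K'\leq L\cap I$, the double-coset formula for $\res{L\cap I}{L}\N{K}{L}$ collapses trivially in the Abelian case, so that is not where any real difficulty lives.)

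The move you want is to rewrite the product itself as a single norm, with no restriction at all. Since $K,K'\leq H,H'\leq I$ and $G$ is Abelian, the product $KK'$ is a subgroup with $K,K'\leq KK'\leq I$ and $KK'\leq L$. Transitivity of the norm gives $\N{K}{L}=\N{KK'}{L}\circ\N{K}{KK'}$ (and similarly for $K'$), and multiplicativity of $\N{KK'}{L}$ then yields
\[
\N{K}{L}\res{K}{H}(a)\cdot\N{K'}{L}\res{K'}{H'}(b)
 = \N{KK'}{L}\Bigl(\N{K}{KK'}\res{K}{H}(a)\cdot\N{K'}{KK'}\res{K'}{H'}(b)\Bigr).
\]
The inner factor is an instance of $Q(\sr{P}|_I,a,b)$ at $L:=KK'\leq I$, hence lies in $\sr{P}(G/KK')$, and then ideal axiom (3) of \cref{defn:tamb ideal} (with $\N{KK'}{L}(0)=0$ in $\burn$) places the whole expression in $\sr{P}(G/L)$. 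This applies the ideal property to the product itself rather than to a restriction of it, closing the gap. You had nearly all the pieces; the error was routing through $\res{L\cap I}{L}$ instead of factoring the outer norms through a common intermediate subgroup inside $I$.
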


\begin{proof}
Let $H, H' \leq I$, $a\in \burn_I(I/H) =\burn_G(G/H)$ and $b\in \burn_I(I/H')=\burn_G(G/H')$. The claim will follow from showing that if $Q(\sr{P}|_I,a, b)$ holds in $\burn_I$ then $Q(\sr{P}, a, b)$ holds in $\burn_G$. Assume it holds in $\burn_I$ and let $K, K', L \leq G$ be as in \cref{def:Q}. Since $G$ is Abelian, we may safely ignore the conjugation maps. Then, since $K, K' \leq I$, we have that $KK' \leq I$. Moreover, $KK' \leq L$, implying that
\begin{align*}
    (\N{K}{L}&\circ \res{K}{H}(a))\cdot(\N{K'}{L}\circ \res{K'}{H'}(b)) \\
    &= \N{KK'}{L}((\N{K}{KK'}\circ \res{K}{H}(a))\cdot(\N{K'}{KK'} \circ \res{K'}{H'}(b)))
\end{align*}
By assuming that $Q(\sr{P}|_I, a, b)$ holds, we have $(\N{K}{KK'}\circ \res{K}{H}(a))\cdot(\N{K'}{KK'} \circ \res{K'}{H'}(b)) \in \sr{P}|_I(I/KK') = \sr{P}(G/KK')$ implying that the desired product is in $\sr{P}(G/L)$, thus completing the proof. 
\end{proof}

Nakaoka's previous work means that we only need to establish the result when the order of $G$ is a composite. Before we do so, we recall the definition of the M\"obius function of a poset, which can be used to state a number of combinatorial properties (cf. \cite{godsil2018}). 

\begin{definition}
The M\"obius function of a finite poset is defined by \begin{equation*}
\mu(a, b) = \begin{cases}
0 & a \not\leq b, \\
1 & a = b, \\
- \sum_{x < b} \mu(a, x) & \text{otherwise.}
\end{cases}
\end{equation*}
The M\"obius function of a finite group $G$ is defined as the M\"obius function of the poset of subgroups of $G$ partially ordered by inclusion. If $G$ is a cyclic group and $H\leq K\leq G$, then
$$\mu(H, K) = \mu(|K:H|),$$
where the $\mu$ on the right side of the equality is the standard M\"obius function from number theory.
\end{definition}

\begin{theorem}
\label{thm:main_thm}
For a finite cyclic group $G$, $$\Spec(\burn_G) = \{\fr{p}_{C, p} \mid C \leq G, ~p \text{ prime or zero }\}.$$ 
\end{theorem}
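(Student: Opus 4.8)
The plan is to prove the reverse inclusion $\Spec(\burn_G)\subseteq\{\fr{p}_{C,p}\}$ by induction on $|G|$, with the cyclic $p$-group case of Nakaoka (\cref{thm:Nak_main}) as the base case. Write $G=C_n$ and let $\sr{P}$ be an arbitrary prime ideal. First I would pin down the ``characteristic'' $p$ of $\sr{P}$: by \cite[Prop. 2.6]{nakaoka:2012}, $\sr{P}(G/e)=(p)$ for some prime or zero $p$, so $\sr{P}$ is an ideal over $p$ in the sense defined before \cref{thm:p-gens}. The goal is then to produce a subgroup $C\leq G$ with $\sr{P}=\fr{p}_{C,p}$. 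The natural candidate for $C$ is assembled Sylow-by-Sylow: for each prime $\ell\mid n$, \cref{lem:res to subgp} gives that $\sr{P}|_{\text{Syl}_\ell(G)}$ is a prime ideal of $\burn_{\text{Syl}_\ell(G)}$, and since $\text{Syl}_\ell(G)$ is a cyclic $\ell$-group, \cref{thm:Nak_main} tells us $\sr{P}|_{\text{Syl}_\ell(G)}=\fr{p}_{C_\ell,p_\ell}$ for some $C_\ell\leq\text{Syl}_\ell(G)$ and some prime-or-zero $p_\ell$. One checks the $p_\ell$ are forced to be compatible with $p$ (using that restriction to $G/e$ is consistent), and then takes $C=\prod_\ell C_\ell$, using the CRT decomposition of the cyclic group.

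With this $C$ in hand, the containment $\fr{p}_{C,p}\subseteq\sr{P}$ should follow from the Tambara-generator analysis already carried out: \cref{thm:tamb_gens} says precisely that if $\sr{P}|_{\text{Syl}_\ell(G)}=\fr{p}_{\text{Syl}_\ell(C),p}$ for every $\ell\mid n$, then $\fr{p}_{C,p}\subseteq\sr{P}$, and by construction $\text{Syl}_\ell(C)=C_\ell$ and the restriction matches. So the real content is the \emph{reverse} containment $\sr{P}\subseteq\fr{p}_{C,p}$: given $X=\sum_I m_I\,G/I\in\sr{P}(G/H)$ for some $H\leq G$, one must show $\varphi^J_{H,p}(X)=0$ for every $J\leq H$ with $J\preccurlyeq_G C$, i.e. every $J\leq H\cap C$. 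By \cref{lem:phi_res}(i) it suffices to treat $H=G$, i.e. to compute marks on $G/G$. Here I would use the mark homomorphism $\varphi^J_G$ and argue that $\varphi^J_G(X)\equiv_p 0$ by reducing, via restriction to $\text{Syl}_\ell(G)$ for the relevant $\ell$ and Möbius inversion over the subgroup lattice of the cyclic group, to the already-known statement that $\sr{P}|_{\text{Syl}_\ell(G)}\subseteq\fr{p}_{C_\ell,p}$. The Möbius function of the cyclic subgroup lattice (just introduced) is the tool that converts ``all coefficients of the restriction are divisible by $p$'' type statements into ``the mark is divisible by $p$'' statements, mirroring the $\psi^J$/$\zeta^J$ inversion in \cref{thm:first_gen}.

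I expect the main obstacle to be the reverse containment, and specifically the bookkeeping of how a prime ideal $\sr{P}$ of $\burn_{C_n}$ is determined by its restrictions to the Sylow subgroups together with its behavior ``across'' different primes. The subtlety is that knowing $\sr{P}|_{\text{Syl}_\ell(G)}$ for each $\ell$ does not a priori determine $\sr{P}(G/H)$ for subgroups $H$ that are not $\ell$-groups — one needs that $\sr{P}(G/H)$ is reconstructed from the lower levels by the inverse-restriction procedure (as in Nakaoka's $\mathcal{L}$ operator), which requires an argument that a prime ideal of $\burn_{C_n}$ is ``maximal subject to its restrictions'' in an appropriate sense. I would handle this by showing that $\sr{P}(G/G)$ is contained in the inverse image under $\bigoplus_\ell\res{\text{Syl}_\ell(G)}{G}$ of $\bigoplus_\ell\sr{P}|_{\text{Syl}_\ell(G)}(\text{Syl}_\ell(G)/\text{Syl}_\ell(G))$ — essentially because an element of $\burn(G/G)$ lying in $\sr{P}$ restricts into each $\sr{P}|_{\text{Syl}_\ell}$, and on cyclic groups the combined restriction map is injective enough (after tracking marks) to recover the divisibility conditions defining $\fr{p}_{C,p}$. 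Combining the two containments gives $\sr{P}=\fr{p}_{C,p}$, completing the induction and the proof.
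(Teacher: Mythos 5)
Your overall architecture matches the paper's: induct on $|G|$ with \cref{thm:Nak_main} as the base case, assemble $C$ from the Sylow restrictions via \cref{lem:res to subgp}, and obtain the forward containment $\fr{p}_{C,p}\subseteq\sr{P}$ from \cref{thm:tamb_gens}. The gap is in your handling of the reverse containment at the top level $G/G$. You assert that the combined restriction map $\bigoplus_\ell\res{\text{Syl}_\ell(G)}{G}$ is ``injective enough (after tracking marks) to recover the divisibility conditions defining $\fr{p}_{C,p}$.'' This is false. Take $G=C_6$ and $C=G$: restricting to $C_2$ and $C_3$ gives, via \cref{lem:phi_res}(i), control over $\varphi_G^e$, $\varphi_G^{C_2}$, and $\varphi_G^{C_3}$, but says nothing about $\varphi_G^{C_6}$ --- which is precisely the mark the condition $X\in\fr{p}_{C_6,p}(G/G)$ requires you to control. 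The inverse image you describe is therefore strictly larger than $\fr{p}_{C,p}(G/G)$, and the subtlety you flagged about reconstructing $\sr{P}(G/G)$ from its Sylow restrictions is not resolved by that claim.

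The fix, and what the paper actually does, is to run the induction on $|G|$ over \emph{all} proper subgroups, not just Sylows: for every $H<G$, \cref{lem:res to subgp} plus the inductive hypothesis give $\sr{P}(G/H)=\fr{p}_{C,q}(G/H)$, which together with \cref{lem:phi_res}(i) controls $\varphi_G^H(X)$ for every proper $H\leq C$. If $C<G$ that is all the marks the definition of $\fr{p}_{C,q}(G/G)$ mentions, and you are done. The only nontrivial case is $C=G$, where $\varphi_G^G(X)=m_G$ is invisible to every proper restriction. If $q\mid|G|$, a one-line computation with the mark at an index-$q$ subgroup finishes it; if $q\nmid|G|$, one proves $|G:H|m_H\equiv_q\mu(H,G)\,m_G$ for all $H\leq G$ by downward induction over the full subgroup lattice, specializes to the minimal subgroup $K$ of squarefree index, and extracts $m_K|G:K|\equiv_q 0$, hence $m_G\equiv_q 0$. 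Your Möbius instinct is the right one, but it must run over the entire subgroup lattice of $G$, and it serves only to pin down the single missing top mark rather than to reconstruct $\sr{P}(G/G)$ from Sylow data.
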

\begin{proof}
The proof will proceed by induction on $n= |G|$. By Nakaoka's theorem, the result holds for all cyclic $p$-groups and so we may assume that $n$ is a composite with prime factorization $n=p_1^{\nu_1} \cdots p_m^{\nu_m}$. Let $\sr{P}$ be a prime ideal of $\burn_G$ over $q$. Assuming the result for all subgroups of order smaller than $n$, we will prove that $\sr{P}=\fr{p}_{C,q}$, where $C$ is constructed as follows:

By the restriction to subgroups lemma (\cref{lem:res to subgp}), for all $i = 1, \cdots, m$, we have $\sr{P}|_
{C_{p_i^{\nu_i}}} = \fr{p}_{C_i, q}$ for some $C_i \leq
C_{p_i^{\nu_i}}$. Define $C$ as the product
\begin{align*}
    C &= \prod_{i=1}^m C_i
\end{align*}
so that $C\cap 
C_{p_i^{\nu_i}}= C_i$ for each $p_i \mid n$.

To show $\fr{p}_{C, q} \subseteq \sr{P}$, note that for all $p$-groups $H\leq 
C_{p_i^{\nu_i}}$, 
we have $\sr{P}(G/H) = \fr{p}_{C, q}(G/H) =  \fr{p}_{C_i, q}(
C_{p_i^{\nu_i}}/H)$. It follows from \cref{thm:tamb_gens} that $\fr{p}_{C, q} \subseteq \sr{P}$. 

To show the reverse inclusion, by induction on $n$ we may assume that for all $H < G$, $\sr{P}(G/H) = \fr{p}_{H\cap C, q}(G/H) = \fr{p}_{C, q}(G/H)$. 
Therefore it suffices to show that the ideals are identical on $G/G$, so we take $X \in \sr{P}(G/G)$. For all $H \leq C$ with $H < G$, we have $\varphi_G^H(X) = \varphi_H^H(\res{H}{G}(X))$. Since $\res{H}{G}(X) \in \sr{P}(G/H) = \fr{p}_{C, q}(G/H)$ it follows that $\varphi_G^H(X) \in (q)$. If $C < G$, then $\varphi_G^H(X) \in (q)$ for all $H \leq C$; this is precisely the condition that $X \in \fr{p}_{C, q}(G/G)$. This shows that $\sr{P}(G/G) \subseteq \fr{p}_{C, q}$, completing the proof for $C<G$. 

In the case that $C = G$, all that remains to be shown is that $\varphi_G^G(X) \in (q)$.
Let $$X = \sum_{I\leq G}m_IG/I,$$
so then $$\varphi_G^G(X) = m_G.$$
We wish to show $m_G \in (q)$. Let $p \mid n$ be prime, so that there exists an $H\leq G$ with $|G:H| = p$. By the last paragraph we have
$$\varphi_G^H(X) = \sum_{H\geq I \geq G}m_I|G:I| = m_G + pm_H \in (q)$$
If $q \mid n$ we obtain the desired result by taking $p = q$. 

Now suppose that $q \nmid n$. Then for all $H\leq G$ with $|G:H| = p_i$ for $i = 1, \cdots m$ we have shown that 
$$ |G:H|m_H \equiv_q -m_G.$$ We claim that for all $H\leq G$, $|G:H|m_H \equiv_q \mu(H, G)m_G$. We have already shown the claim to hold for all $H$ with prime index; now let $H\leq G$ and assume the claim holds for all $H < K \leq G$. Then 
\begin{align*}
0 &\equiv_q \varphi_G^H(X) \\
  &\equiv_q \sum_{H \leq I \leq G} |G:I|m_I \\
  &\equiv_q |G:H|m_H + m_G\sum_{H < I \leq G}\mu(I, G) \\
  &\equiv_q |G:H|m_H - \mu(H, G)m_G,
\end{align*}
which shows that the claim holds in this case as well. Therefore by induction the claim holds for all subgroups of $G$. 

Let $K<G$ be the minimal subgroup of $G$ whose index is square free. Then since $\mu(H, G) = \mu(|G:H|)$ we have that $\mu(I, G) = \pm 1$ for all $I\geq K$ and $\mu(I, G) = 0$ for all $I \not\geq K$. Thus
$$m_G \equiv_q \frac{|G:K|m_K}{\mu(K, G)},$$
which, combined with the relation $|G:H|m_H \equiv_q \mu(H, G)m_G$, gives
\[
\frac{\abs{G:H}m_H}{\mu(H,G)} \equiv_q m_G \equiv_q \frac{\abs{G:K}m_K}{\mu(K,G)}
\] and hence
\[
m_H\equiv_q \frac{|G:K|m_K}{\mu(K, G)}\cdot\frac{\mu(H,G)}{\abs{G:H}} = \frac{\abs{H:K}\mu(H,G)}{\mu(K,G)}m_K.
\]
Therefore we can write
$$X = qY + m_K\sum_{K \leq H \leq G}\frac{|H:K|\mu(H, G)}{\mu(K, G)}G/H$$ for some some $Y\in \underline{A}_G(G/G)$.
Thus
\begin{align*}
0 &\equiv_q \varphi_G^K(X)\\ 
           &\equiv_q \varphi_G^K(X - qY) \\
           &= m_K\sum_{K \leq H \leq G}\frac{|H:K|\mu(H, G)}{\mu(K, G)}|G:H| \\
           &= m_K \frac{|G:K|}{\mu(K, G)}\sum_{K \leq H \leq G}\mu(H, G) \\
           &= m_K \frac{|G:K|}{\mu(K, G)}\mu(K, G) \\
           &= m_K |G:K|.
\end{align*}
Since we assumed $q$ to be relatively prime to $\abs{G}$, we therefore have that $m_K \in (q)$, implying (by our earlier relation) that $m_G \in (q)$, as desired.
\end{proof}

In the case of a cyclic group $C_n$, we can expand upon \cref{thm:containment} and explicate the relationship between the $\fr{p}_{C,p}$. A simplifying observation is that
\[
O^p(C_n) = C_{n/p^{\nu(n)}},
\]
where $p^{\nu(n)}$ is the highest power of $p$ that divides $n$. 

\begin{corollary}\label{rmk:cyclic containment} Let $G=C_n$, $C_i, C_j\leq C_n$, and $p,q$ prime. Then
\begin{enumerate}
    \item[(i)] $\fr{p}_{C_i, 0}\subseteq \fr{p}_{C_j, 0}$ if and only if $j\mid i$,
    \item[(ii)] $\fr{p}_{C_i, 0}\subset \fr{p}_{C_i, p}$ and $\fr{p}_{C_i, p}\not\subseteq\fr{p}_{C_j, 0}$,
    \item[(iii)] $\fr{p}_{C_i, p}\subseteq \fr{p}_{C_j, q}$ if and only if $p=q$ and $C_{i/p^{\nu(i)}}\leq C_{j/p^{\nu(j)}}$.
\end{enumerate}
\end{corollary}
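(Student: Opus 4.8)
The plan is to derive all three items directly from \cref{thm:containment} (equivalently, from Dress's \cref{thm:dress containment}), translating the lattice conditions appearing there into arithmetic ones. Two elementary facts about a cyclic group $C_n$ do all the work. \textbf{(A)} For each divisor $d\mid n$ there is a unique subgroup $C_d\le C_n$, of order $d$, and $C_d\le C_e$ if and only if $d\mid e$. \textbf{(B)} For every subgroup $C_i\le C_n$ and every prime $p$ one has $O^p(C_i)=C_{i/p^{\nu(i)}}$, where $p^{\nu(i)}$ is the largest power of $p$ dividing $i$. Fact (B) is just the displayed identity $O^p(C_n)=C_{n/p^{\nu(n)}}$ applied to the cyclic group $C_i$: the quotient $C_i/C_{i/p^{\nu(i)}}$ has $p$-power order, and every quotient of $C_i$ of $p$-power order factors through it, so $C_{i/p^{\nu(i)}}$ realizes the intersection in \cref{defn:OpH}.

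Granting these, part (i) is immediate: \cref{thm:containment}(i) says $\fr{p}_{C_i,0}\subseteq\fr{p}_{C_j,0}$ iff $C_j\le C_i$, and by (A) this is $j\mid i$. Part (ii) is \cref{thm:containment}(ii) verbatim with $H=C_i$ and $K=C_j$, so only a change of notation is involved; for an explicit check one can note, exactly as in the proof of \cref{thm:containment}(ii), that $p\,C_n/C_n$ has every mark equal to $p$, hence lies in $\fr{p}_{C,p}(C_n/C_n)$ for every $C$ but in $\fr{p}_{C,0}(C_n/C_n)$ for no $C$, witnessing both $\fr{p}_{C_i,0}\subsetneq\fr{p}_{C_i,p}$ and $\fr{p}_{C_i,p}\not\subseteq\fr{p}_{C_j,0}$.

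For part (iii), \cref{thm:containment}(iii) gives $\fr{p}_{C_i,p}\subseteq\fr{p}_{C_j,q}$ iff $p=q$ and $O^p(C_j)\le O^p(C_i)$; feeding in (B) turns the second condition into $C_{j/p^{\nu(j)}}\le C_{i/p^{\nu(i)}}$, and (A) rewrites this as the divisibility $(j/p^{\nu(j)})\mid(i/p^{\nu(i)})$ of the prime-to-$p$ parts of $i$ and $j$, which is the relation recorded in (iii) (the inclusion being an equality of ideals exactly when $j/p^{\nu(j)}=i/p^{\nu(i)}$). There is no genuine difficulty here — the corollary is bookkeeping layered on \cref{thm:containment}. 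The only step meriting a careful sentence is fact (B), namely that $O^p$ of a subgroup $C_i$, not merely of $C_n$ itself, is the prime-to-$p$ subgroup of $C_i$; and one must keep in mind throughout part (iii) that $\nu(i)$ denotes the $p$-adic valuation of $i$, so the prime $p$ has to be tracked along every line.
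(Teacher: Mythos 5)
Your approach is the paper's: the corollary is stated without proof immediately after the observation $O^p(C_n)=C_{n/p^{\nu(n)}}$, so the intended argument is exactly the translation of \cref{thm:containment} into cyclic-group arithmetic that you carry out. Your facts (A) and (B) are the two implicit ingredients, and parts (i) and (ii) are handled correctly.

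In (iii), however, there is a mismatch you pass over. \cref{thm:containment}(iii) with $H=C_i$, $K=C_j$ gives containment iff $p=q$ and $O^p(C_j)\leq O^p(C_i)$, which by (A) and (B) is the \emph{divisibility} $(j/p^{\nu(j)})\mid(i/p^{\nu(i)})$. The corollary as printed instead states the \emph{equality} $C_{i/p^{\nu(i)}}=C_{j/p^{\nu(j)}}$, which is strictly stronger, so your claim that the divisibility ``is the relation recorded in (iii)'' is not accurate as written. You are in fact right and the printed condition is not: \cref{fig:incl Cn} and the surrounding prose (``the containment structure of the ideals over $p$ is identical to the subgroup structure of $C_{n/p^\nu}$'') exhibit strict, non-trivial containments at each level $p\mid n$, which could not exist if (iii) demanded equality of prime-to-$p$ parts. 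Concretely, in $C_6$ with $p=q=2$ one has $O^2(C_2)=e\leq C_3=O^2(C_3)$, so \cref{thm:containment}(iii) forces $\fr{p}_{C_3,2}\subseteq\fr{p}_{C_2,2}$ even though $C_{3/2^{\nu(3)}}=C_3\neq C_1=C_{2/2^{\nu(2)}}$. Your derivation is sound; what you should add is an explicit remark that it yields the containment $C_{j/p^{\nu(j)}}\leq C_{i/p^{\nu(i)}}$ rather than equality, flagging that the printed ``$=$'' in (iii) needs to be corrected to ``$\leq$'' (equivalently, to the divisibility of the prime-to-$p$ parts).
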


These rules imply that the containment structure of ideals over $p$ is dual to the subgroup structure of $C_{n/p^\nu}$. The only connections between different ``levels'' are those given by (ii), namely, those between $0$ and a prime. These observations are highlighted in \cref{fig:incl Cn}.

\begin{figure}[h!]
    \centering
\begin{tikzpicture}[node distance=1.5cm,line width=0.25pt]
\title{Title}
\node(0) at (0,0)  {$\fr{p}_{e,0}$};
\node(Ci)       [above left  =0.2cm of 0] {$\fr{p}_{C_p,0}$};
\node(Cj)       [above right  =0.2cm of 0] {$\fr{p}_{C_q,0}$};
\node(Cij)       [above =1cm of 0] {$\fr{p}_{C_{pq},0}$};

\node(OpCn)       [above =1cm of Cij] {$\fr{p}_{O^p(C_n),0}$};
\node (vdotsu)       [above =0cm of OpCn] {$\vdots$};

\node(vdotsl)       [above =1.5cm of Ci] {$\vdots$};
\node(vdotsr)       [above =1.5cm of Cj] {$\vdots$};
\node(vdots)       [above =0.5cm of Cij] {$\vdots$};

\node(Cn)       [above =2.5cm of vdots] {$\fr{p}_{C_n,0}$};

\draw[<-] (0) -- (Ci);
\draw[<-] (0) -- (Cj);
\draw[<-] (0) -- (Cij);
\draw[<-] (Cj) -- (Cij);
\draw[<-] (Ci) -- (Cij);

\draw (vdotsl) -- (Cn);
\draw (vdotsr) -- (Cn);
\draw (vdotsu) -- (Cn);

\draw[->] (vdotsl) -- (Ci);
\draw[->] (vdotsr) -- (Cj);
\draw[->] (vdots) -- (Cij);

\node(p) at (4,0.75) {$\fr{p}_{e, p}$};
\node(Cip) [above left =0.2cm of p] {$\fr{p}_{C_p,p}$};
\node(Cjp) [above right =0.2cm of p] {$\fr{p}_{C_q,p}$};
\node(Cijp)       [above =1cm of p] {$\fr{p}_{C_{pq},p}$};
\node(OpCnp)       [above =1.75cm of Cijp] {$\fr{p}_{O^p(C_n),p}$};

\draw[gray, ->] (0) -- (p);
\draw[gray, ->] (Cj) -- (Cjp);
\draw[gray, ->] (Ci) -- (Cip);
\draw[gray, ->] (Cij) -- (Cijp);
\draw[gray, ->] (Cn) -- (OpCnp);
\draw[gray, ->] (OpCn) -- (OpCnp);

\draw[<-] (p) -- (Cjp);
\draw[<->] (p) -- (Cip);
\draw[<-] (p) -- (Cijp);
\draw[<-] (Cip) -- (Cijp);
\draw[<->] (Cijp) -- (Cjp);

\node(vdotslp)       [above =0.5cm of Cip] {$\vdots$};
\node(vdotsrp)       [above =0.5cm of Cjp] {$\vdots$};
\node(vdotsp)       [above =0.5cm of Cijp] {$\vdots$};

\draw (vdotslp) -- (OpCnp);
\draw (vdotsrp) -- (OpCnp);
\draw (vdotsp) -- (OpCnp);
\draw[->] (vdotslp) -- (Cip);
\draw[->] (vdotsrp) -- (Cjp);
\draw[->] (vdotsp) -- (Cijp);

\node(p') at (-4,0.75) {$\fr{p}_{e, p'}$};
\node(Cip') [above left =0.2cm of p'] {$\fr{p}_{C_p,p'}$};
\node(Cjp') [above right =0.2cm of p'] {$\fr{p}_{C_q,p'}$};
\node(Cijp')       [above =1cm of p'] {$\fr{p}_{C_{pq},p'}$};
\node(Cnp') [above=3.75cm of p'] {$\fr{p}_{C_n,p'}$};

\draw[gray, ->] (0) -- (p');
\draw[gray, ->] (Cn) -- (Cnp');
\draw[gray, ->] (Ci) -- (Cip');
\draw[gray, ->] (Cj) -- (Cjp');
\draw[gray, ->] (Cij) -- (Cijp');

\node(vdotslp')       [above =0.5cm of Cip'] {$\vdots$};
\node(vdotsrp')       [above =0.5cm of Cjp'] {$\vdots$};
\node(vdotsp')       [above =0.75cm of Cijp'] {$\vdots$};

\draw[<-] (p') -- (Cip');
\draw[<-] (p') -- (Cjp');
\draw[<-] (p') -- (Cijp');
\draw[<-] (Cjp') -- (Cijp');
\draw[<-] (Cip') -- (Cijp');

\draw (vdotslp') -- (Cnp');
\draw (vdotsrp') -- (Cnp');
\draw (vdotsp') -- (Cnp');

\draw[->] (vdotslp') -- (Cip');
\draw[->] (vdotsrp') -- (Cjp');
\draw[->] (vdotsp') -- (Cijp');
\draw[gray, ->] (OpCn) -- (vdotsp');

\node(ldots)       [left =5cm of OpCn] {$\dots$};
\node(rdots)       [right =5cm of OpCn] {$\dots$};

\end{tikzpicture}
    \caption{\textbf{Inclusion structure of prime ideals of $\burn_{C_n}$.} There is an arrow from $\fr{p}_{C_i,p}$ to $\fr{p}_{C_j,q}$ if $\fr{p}_{C_i,p}\subseteq\fr{p}_{C_j,q}$. At the $0$ and $p'$ levels (for $p'\nmid n$), the inclusion structure is dual to the subgroup lattice of $C_n$. If $p\mid n$, then some of the inclusions become equalities, precisely when $O^p(C_i)=O^p(C_j)$.}
    \label{fig:incl Cn}
\end{figure}
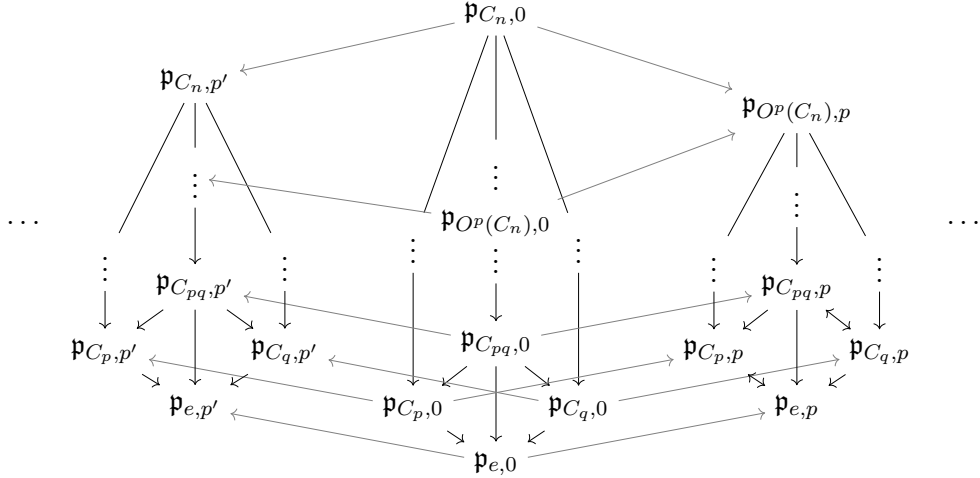


\section{Directions for Further Investigation}

We strongly suspect that \cref{thm:main_thm} extends to a larger class of groups.
By \cref{thm:PCp prime}, we know that the family of ideals we have derived will be contained in the prime spectrum of the Burnside Tambara functor for an arbitrary finite group $G$.
 As we saw in \cref{thm:tamb_gens}, restricting to cyclic groups simplifies the structure of the ideals. However, this result only relies on the fact that every quotient of a cyclic group is a CS group, and there are other classes of groups that satisfy this property (most notably elementary Abelian groups and any Abelian group whose $p$-groups are all cyclic or elementary Abelian). One way to pursue a direct expansion of our results would be to give a version of \cref{thm:Nak_main} for elementary Abelian groups, along with generalizing a few places we used properties of cyclic groups in the proof of \cref{thm:main_thm} and \cref{thm:tamb_gens}. Alternatively, one could try to lift Dress's or Lewis's methods to Tambara functors, although the additional Tambara structure will undoubtedly introduce complications.

\bibliographystyle{abbrv}
\bibliography{references}
\end{document}